 \newtheorem{theorem}{Theorem}[section]
 \newtheorem{cor}[theorem]{Corollary}
 \newtheorem{lem}[theorem]{Lemma}
\newtheorem{lemma-definition}[theorem]{Lemma-Definition}
\newtheorem{proposition-definition}[theorem]{Proposition-Definition}
\newtheorem{Convention}[theorem]{Convention}
 \newtheorem{proposition}[theorem]{Proposition} \theoremstyle{definition}
 \newtheorem{definition}[theorem]{Definition} \theoremstyle{definition}
\newtheorem{defi}[theorem]{Definition} \theoremstyle{definition}
 \theoremstyle{definition}
 \theoremstyle{remark}
 \newtheorem{rem}[theorem]{Remark}
\newtheorem{rem*}[theorem]{Remark}
 \numberwithin{equation}{section}
 \newcommand{\nc}{\newcommand}
\nc{\ssn}{\subsection{}} \nc{\sssn}{\subsubsection{}}
\numberwithin{equation}{section}
\newcommand{\Ind}{\operatorname{Ind}}
\newcommand{\BSx}{\ensuremath{{{BS}_{\ul{x}}}}}
\newcommand{\Kx}{{K}_{\ul{x}}}
\newcommand{\ul}{\ensuremath{\underline}}
\renewcommand{\k}{\mathbb{C}}
\newcommand{\Dd}{\mathbb{D}}
\newcommand{\df}{\overset{\operatorname{def}}{=}}
\renewcommand{\L}{\operatorname{L}}
\renewcommand{\P}{\operatorname{P}}
\newcommand{\J}{\mathcal{J}}
\newcommand{\Ac}{{\mathcal{A}}}
\newcommand{\Hom}{\operatorname{Hom}}
\newcommand{\End}{\operatorname{End}}
\newcommand{\Ext}{\operatorname{Ext}}
\newcommand{\Coker}{\operatorname{Coker}}
\newcommand{\Mod}{\operatorname{Mod}}
\newcommand{\Tr}{\operatorname{Tr}}
\newcommand{\DTr}{\mathbb{D}\operatorname{Tr}}
\newcommand{\Cone}{\operatorname{Cone}}
\newcommand{\tto}{\twoheadrightarrow}
\newcommand{\into}{\hookrightarrow}
\newcommand{\Proj}{\ensuremath{\operatorname{Proj}}}
\newcommand{\Inj}{\ensuremath{\operatorname{Inj}}}
\newcommand{\K}{\ensuremath{\operatorname{K}^-}}
\newcommand{\D}{\ensuremath{\operatorname{D}^-}}
\newcommand{\N}{\ensuremath{\mathbb{N}}}
\newcommand{\Z}{\ensuremath{\mathbb{Z}}}
\renewcommand{\mod}{\ensuremath{\operatorname{mod}}}
\newcommand{\C}{\ensuremath{\mathcal{C}}}
\newcommand{\B}{\ensuremath{\mathcal{B}}}
\newcommand{\Kb}{\ensuremath{\operatorname{K}^b}}
\newcommand{\bA}{\ensuremath{{\mathbf{A}}}}
\newcommand{\bB}{\ensuremath{{\mathbf{B}}}}
\newcommand{\bC}{\ensuremath{{\mathbf{C}}}}
\newcommand{\taul}{\ensuremath{\tau^{\leq 0}}}
\newcommand{\taug}{\ensuremath{\tau^{\geq 0}}}
\newcommand{\taulA}{\ensuremath{\tau^{\leq 0}_{\Ac}}}
\newcommand{\taugA}{\ensuremath{\tau^{\geq 0}_{\Ac}}}
\newcommand{\Tl}{\ensuremath{{\T}^{\leq 0}}}
\newcommand{\Tg}{\ensuremath{\T^{\geq 0}}}
\newcommand{\TlA}{\ensuremath{\T^{\leq 0}_{\Ac}}}
\newcommand{\TgA}{\ensuremath{\T^{\geq 0}_{\Ac}}}
\newcommand{\TlC}{\ensuremath{\T^{\leq 0}_{\C}}}
\newcommand{\TgC}{\ensuremath{\T^{\geq 0}_{\C}}}
\newcommand{\TlB}{\ensuremath{\T^{\leq 0}_{\B}}}
\newcommand{\TgB}{\ensuremath{\T^{\geq 0}_{\B}}}
\newcommand{\isoto}{\ensuremath{\overset{\sim}{\longrightarrow}}}
\newcommand{\TB}{\ensuremath{\T^b_{\B}}}
\newcommand{\TC}{\ensuremath{\T_{\C}}}
\newcommand{\TA}{\ensuremath{\T_{\Ac}}}
\newcommand{\Db}{\ensuremath{\operatorname{D}^b}}
\newcommand{\rad}{\ensuremath{\operatorname{rad}}}
\newcommand{\Ker}{\operatorname{Ker}}
\newcommand{\Jm}{\operatorname{Im}}
\newcommand{\ot}{\operatorname{\otimes}}
\newcommand{\g}{\ensuremath{\mathfrak{g}}}
\newcommand{\h}{\ensuremath{\mathfrak{h}}}
\renewcommand{\b}{\ensuremath{\mathfrak{b}}}
\newcommand{\T}{\ensuremath{\mathcal{T}}}
\newcommand{\Cab}{\ensuremath{\mathcal{C}}}
\newcommand{\KC}{\ensuremath{\operatorname{K}^{-}(\mathcal{C})}}
\newcommand{\Ss}{\ensuremath{{\mathbb{S}}}}
\newcommand{\SC}{\ensuremath{{\mathbb{S}_\C}}}
\newcommand{\SO}{\ensuremath{{\mathbb{S}_\BGG}}}
\newcommand{\SA}{\ensuremath{{\mathbb{S}_{\Ac}}}}
\newcommand{\V}{\ensuremath{\mathbb{V}}}
\newcommand{\BGG}{\ensuremath{\mathcal{O}}}
\newcommand{\A}{\ensuremath{{\mathcal{H}}}}
\newcommand{\AR}{\ensuremath{\mathcal{H}_{R}}}
\renewcommand{\AA}{\ensuremath{\mathcal{H}_{\Ac}}}
\newcommand{\AC}{\ensuremath{\mathcal{H}_{\C}}}
\newcommand{\AB}{\ensuremath{\mathcal{H}_{\B}}}
\newcommand{\m}{\ensuremath{\mathfrak{m}}}
\newcommand{\pa}{\ensuremath{\partial}}
\begin{document}
\author{Juan Camilo Arias Uribe and Erik Backelin}
\title[Higher Auslander-Reiten sequences]{Higher Auslander-Reiten sequences and $t$-structures.}
\address{Juan Camilo Arias Uribe, Departamento de Matem\'{a}ticas, Universidad de Los Andes,
Carrera 1 N. 18A - 10, Bogot\'a, Colombia}
\email{jc.arias147@uniandes.edu.co}
\address{Erik Backelin, Departamento de Matem\'{a}ticas, Universidad de Los Andes,
Carrera 1 N. 18A - 10, Bogot\'a, COLOMBIA}
\email{erbackel@uniandes.edu.co}
 \subjclass[2000]{Primary 16G10, 16G70, 18G99}

\keywords{}

\begin{abstract}
Let $R$ be an artin algebra and $\C$ an additive subcategory of
$\mod(R)$. We construct a $t$-structure on the homotopy category
$\KC$ and argue that its heart $\AC$ is a natural domain for
higher Auslander-Reiten (AR) theory. In the paper \cite{BJ} we
showed that $\operatorname{K}^{-}({\mod(R)})$ is a natural domain
for classical AR theory. Here we show that the abelian categories
$\A_{\mod(R)}$ and $\AC$ interact via various functors. If  $\C$
is functorially finite then $\AC$ is a quotient category of
$\A_{\mod(R)}$. We illustrate our theory with two examples:

When $\Cab$ is a maximal $n$-orthogonal subcategory Iyama
developed a higher AR theory,  see \cite{I}.  In this case we show
that the simple objects of $\AC$ correspond  to Iyama's higher AR
sequences and derive his higher AR duality from the existence of a
Serre functor on the derived category $\Db(\AC)$.

The category $\BGG$ of a complex semi-simple Lie algebra fits
into higher AR theory in the situation when $R$ is the coinvariant
algebra of the Weyl group.
\end{abstract}

\maketitle
\section{Introduction}
\subsection{{}}
In Auslander-Reiten (AR) theory one studies AR (or almost split)
short exact sequences in an abelian category $\Ac$. Throughout
this paper $R$ denotes an artin algebra. Typically $\Ac =
\mod(R)$, the category of finitely generated left $R$-modules, but
authors have also considered modules over a complete local
noetherian ring and coherent sheaves on a projective variety, see
e.g.  \cite{I2}, \cite{J} and references therein.

Let $\C$ be a full additive Karoubi closed subcategory  of $\Ac$.
By a higher dimensional AR theory in $\C$ we mean, loosely
speaking, a theory that resembles classical AR theory where the AR
sequences are replaced by certain complexes in $\C$. These
complexes should be ``almost split" or minimal in some sense. Such
a theory naturally takes place inside the triangulated category
$\TC \df \KC$ of bounded above complexes in $\C$ modulo homotopy.
We develop the theory under the additional assumption that $\C$ is
\emph{$\Ac$-approximating}. This means that for any object $X \in
\TA$ there is an object $X_\C \in \TC$ and a quasi-isomorphism
$X_\C \to X$ which induces an isomorphism $\Hom_{\TC}(\- , X_\C)
\isoto \Hom_{\TA}(\- , X)|_{\TC}$ of functors. The guiding example
is $\C = \Proj(\Ac)$ the full subcategory of projectives in $\Ac$
in the case when $\Ac$ has enough projectives.

\medskip

The theory we here develop is modelled on the approach to
classical AR theory in \cite{BJ}. We here briefly review this
theory. A more detailed review is given in Section \ref{Section
Preliminaries}. On $\TA$ there is a $t$-structure $(\TlA,\TgA)$ where $\TlA$ (resp. $\TgA$) is the strictly (i.e. closed under isomorphisms) full subcategory 
generated by $\{X \in \TA\mid  X^i = 0 \hbox{ for } i >0\}$ 
(resp.  $\{X \in \TA\mid X^i = 0 \hbox{ for } i < -2 \hbox{ and }  H^{-2}(X) = H^{-1}(X) = 0 \}$).
Its heart $\AA \df \TlA \cap \TgA$ is
the abelian category whose objects are isomorphic to three-term sequences $X =
[X^{-2} \to X^{-1} \to X^0]$ with vanishing cohomology in negative
degrees. In the case when $\Ac = \mod(R)$ then $X$ is simple in
$\AA$ if and only if $X$ is an AR sequence in the usual sense,
provided that $X^0$ is non-projective and indecomposable.
Moreover, AR duality and the existence of AR sequences is derived from
a Serre functor $\SA: \Proj(\AA) \isoto \Inj(\AA)$, where
$\Inj(\AA)$ denotes the full subcategory of injectives in $\AA$.
(This is influenced by ideas of Krause, \cite{K},\cite{K2}.)

\medskip

In this paper we will generalize this to a higher dimensional
theory on an $\Ac$-approximating subcategory $\C$ as follows:
There exists a $t$-structure $(\TlC,\TgC)$ on $\TC$  with $\TlC
\df  \{X \in \TC\mid  X^i = 0 \hbox{ for }  i >0\}$. (At this
level of generality
 $\TgC$ can only be described as the right orthogonal complement of
 $\TC^{<0}$.)
We study this $t$-structure, its heart $\AC$ and argue that this
provides a convenient framework for higher AR theory. In this
setting simple objects of the abelian category $\AC$ serve as
higher AR sequences and higher AR duality becomes a form of Serre
duality.

\subsection{{}}
In Section 3 we merely assume that $\C$ is any $\Ac$-approximating
subcategory where $\Ac$ is an arbitrary abelian category. We study
the heart $\AC$ and some natural functors between it and $\AA$. We
say that an abelian category has enough simples if each projective
has a simple quotient object.  We show  that if
$\AA$ has enough simples then so does $\AC$ (Proposition \ref{simple
objects prop} and Corollary \ref{simple objects cor}). We also show that
there is an equivalence
$$\P: \C \isoto \Proj(\AC), \, M \mapsto \P_M$$
where $\P_M $ is the complex with $M$ concentrated in degree $0$.
We show that $\AC$ has enough projectives and so it follows that
$\TC \cong \D(\AC)$ (Proposition \ref{Projectives in AC prop}). We
also relate our construction with the Yoneda embedding, which is a
standard tool in classical AR theory.

 \medskip

 In order to obtain more specific results we will from Section \ref{Duality Big Section} and onwards make the
 assumption that $\Ac =\mod(R)$.
 This assumption guarantees that all $\Hom$'s in the categories $\AA$ and $\AC$ are finitely generated
 modules over the center of $R$ and provides a duality functor $\Dd: \mod(R) \to \mod(R^{op})$.
 We show  that the Serre  functor $\SA$ induces a Serre functor $\SC: \Proj(\AC) \isoto \Inj(\AC)$, i.e. a functor together with an isomorphism
 $\Hom_{\AC}(\P_M, \- ) \cong \Dd \Hom_{\AC}(\- , \Ss_\C \P_M)$ (Proposition \ref{Duality prop}). We then show that the simple quotient $\L_M$ of $\P_M$ is isomorphic to the
 image of a certain ``minimal'' map $\P_M \to \Ss_\C \P_M$ (Corollary \ref{Duality cor}).

\medskip

The category $\C$ is called\emph{ functorially finite} if $\C$ is
$\Ac$-approximating and dually $\C^{op}$ is
$\Ac^{op}$-approximating. This notion occurs frequently in the
literature (see \cite{AS}, \cite{BR}, \cite{I},\cite{I2}).  For us its main importance
is that it implies that $\AC$ has enough injectives and that these
are all of the form $\SC \P_C$, for $C \in \C$. These facts are
proved in Proposition \ref{injectivesec prop}. In Theorem
\ref{injectivesec thm}  we use them to prove that $\pi_\C$ has a
right adjoint and is a quotient functor.

All this could be summarized as a slogan: ``higher AR theory takes
place in a quotient category of classical AR theory."

\medskip

We say that $\Ac$ has finite $\C$-dimension if all objects of
$\Ac$ admit $\C$-resolutions of uniformly bounded length. This is
equivalent to finiteness of the cohomological dimension of $\AC$
(Corollary \ref{Projectives in AC cor}). Under this hypothesis
$\TlC \cap \T^b_\C$ is a $t$-structure on $ \T^b_\C$ with the same
heart $\AC$. Moreover, the Serre functor induces
 an equivalence of triangulated
categories $\SC:  \T^b_\C \isoto \T^b_\C$ (Corollary
\ref{injectivesec cor}) which is a pleasant way to think of higher
AR duality.

 \medskip

 We give two examples of  higher AR theories. Both are functorially finite and of finite
 $\C$-dimension.
\subsection{{}}
Our first example is Iyama's generalized AR theory, \cite{I}, \cite{I2}.
We assume in Section \ref{Oyama's own section} that $\C$ is a
maximal $n$-orthogonal subcategory of $\mod(R)$. In this case an
object of $\AC$
 is an exact sequence $V = [C^{-n-2} \hookrightarrow \ldots \to C^{-1} \to C^0]$. This object is simple in $\AC$ if and only if it is an almost split sequence
 in the sense of Iyama. Our key observation is that the injectives  in $\AC$ take the
specific form:
 $$
\SC \P_X =  [X' \to I^{-n-1} \to \ldots \to I^0]
 $$
where the $I^j$'s are injective $R$-modules (Proposition
\ref{again my sweet injectives prop}). Now Serre duality leads to
a duality between $X$ and $X'$ which can be phrased as $X' \cong
\DTr \Omega^n X$, where $\Omega^n$ is the $n$'th syzygy in a
projective resolution of $X$ and $\DTr$ is Auslander and Reiten's
dual of the transpose. Moreover, if $V$ is simple and $C^0$ is
indecomposable and non-projective we retrieve Iyama's original
formula for AR duality: $C^{-n-2} = \DTr \Omega^n C^0$. This is
showed in Theorem \ref{really really AR thm}. The material in this
section is a shortened version of the master thesis \cite{A} of
the first author.

\subsection{{}}
In Section \ref{section frobenius algebras} we discuss the special
duality features that arise when $R$ is a Frobenius algebra. We
apply those results in Section \ref{section cat bgg} to the
category $\BGG$ of Bernstein-Gelfand-Gelfand of a complex
semi-simple Lie algebra $\g$. By theory of Soergel, see \cite{S},
the derived category $\Db(\BGG_0)$ of the principal block $\BGG_0
\subset \BGG$ is equivalent to a full subcategory of
$\T^b_{\mod(R)}$ where  $R$ is the coinvariant algebra of the Weyl
group of $\g$. The coinvariant algebra is Frobenius. In Theorem
\ref{Soergel's theory thm}  this allows us to interpret $\BGG$ as
a quotient category of $\A_{\mod(R)}$. This provides an
interesting relationship between $\BGG$ and AR theory in $\mod(R)$
that deserves to be investigated further. In particular, there
seem to be some interesting links between AR theory and the so
called dual Rouquier complex (see \cite{EW}) which is an
interesting object since the cohomology of its coinvariants
calculates higher extensions of Verma modules. See Sections
\ref{multisar} and \ref{Verma ext}.

\subsection{Acknowledgements} We like to thank Paul Bressler and
Kobi Kremnizer for useful conversations. We thank the referee for
valuable suggestions and for pointing out an error in an
earlier version.

\pagebreak

\section{Preliminaries}\label{Section Preliminaries}
In this section we fix notations and review the results of
\cite{BJ}.
\subsection{{Terminology}}
Throughout this paper we shall use the following terminology and
notations:

\medskip

\noindent Let $R$ be an artin algebra. Thus $R$ is a finite
algebra over its center $k$ which is a commutative artinian ring.
Let $\mod(R)$ be the category of all finitely generated left
$R$-modules.

Let $E$ be an injective hull of $k$ as a $k$-module. Then we have
the duality functor $M \mapsto \Dd M \df \Hom_k(M,E)$ that
interchanges finitely generated left and right $R$-modules; its
square is isomorphic to the identity. We also use the symbol $\Dd$
for the duality functor $\Hom_k(\ , E)$ on $\mod(k) =
\mod(k^{op})$. There is also the functor $M \mapsto M^* \df
\Hom_R(M,R)$ that again interchanges left and right $R$-modules.
We have $P \cong P^{**}$ whenever $P$ is a (finitely generated)
projective.

Let $P^{-1} \overset{\pa}{\longrightarrow} P^0 \to M \to 0$ be a
minimal projective presentation of some $M \in \mod(R)$. Then
define $\Tr M = \Coker \pa^* \in \mod(R^{op})$ and $\DTr M \in
\mod(R)$  (see \cite{ARS}).

\medskip

\noindent Let $\Ac$ denote a small abelian category, $\Proj(\Ac)$
and $\Inj(\Ac)$ the full subcategories of $\Ac$ consisting of
projectives and injectives, respectively.

\medskip

\noindent Let $\C$ denote a full additive subcategory of $\Ac$. We
assume furthermore that $\C$ is\emph{ Karoubi closed} (i.e. direct
summands in $\Ac$ of $\C$-objects belongs to $\C$) and
\emph{strictly full} (i.e. closed under isomorphisms). Let
$C^{-}(\C)$ be the category of bounded above complexes in $\C$.
Let $\TC \df \K(\C)$ be the triangulated category of bounded above
complexes in $\C$ modulo homotopy and $\T^b_\C \df \Kb(\C)$ its
subcategory of bounded complexes.

For an object $M \in \C$ and $n \in \mathbb{N}$ we let $M[n] \in
\TC$ denote the complex with $M$ concentrated in degree $-n$.

\medskip

\noindent Let $\T$ be a triangulated category and $\S$ a full
subcategory. $\S$ is \emph{closed under extensions} if for any
triangle $A \to B \to C \overset{+1}{\longrightarrow}$ in $\T$
with $A, C \in \mathcal{S}$ it follows that $B \in \mathcal{S}$.
$\mathcal{S}$ is closed under left (resp. right) shifts if
$\mathcal{S}[1] \subseteq \mathcal{S}$ (resp. $\mathcal{S}[-1]
\subseteq \mathcal{S}$). The right complement $\S^\perp$ (resp.
left complement ${}^\perp\S$) of $\S$ is the full subcategory of
$\T$ whose objects are $ \{X \in \T\mid  \Hom_\T(\S, X) = 0\}$
(resp. $ \{X \in \T\mid \Hom_\T(X,\S) = 0\}$).

\begin{defi}\label{Notations on triangulated categories defi} (See \cite{BBD}.) A
\emph{$t$-structure } on a triangulated category $\T$ is a pair of
full additive subcategories $(\Tl, \Tg)$ such that $\Tl$ is closed
under left shifts, $\Hom_\T(\Tl, \Tg[-1]) = 0$ and each $M \in \T$
fits into a triangle $M' \to M \to M''
\overset{+1}{\longrightarrow}$ with $M' \in \Tl$ and $M'' \in
\Tg[-1]$.
\end{defi}
The heart $\A \df \Tl \cap \Tg$ of the $t$-structure is an abelian
category. The inclusion functor $\Tl \to \T$ (resp. $\Tg \to \T$)
admits a right adjoint $\taul: \T \to \Tl$ (resp. left adjoint
$\taug: \T \to \Tg$). Put $\T^{\leq n} \df \Tl[-n]$, $\tau^{\leq n}
= [-n] \circ \taul \circ [n]: \T \to \T^{\leq n}$ and similarly
define $\T^{\geq n}, \tau^{\geq n}$, for $n \in \N$. It is known
that $\T^{\geq 1} = (\Tl)^\perp$, $\T^{\leq 1} = {}^\perp(\Tg)$
and $\Tl$ is closed under extensions.

Conversely,  assume that $\Tl \subseteq \T$
is a full subcategory closed under extensions and left shifts
and that the inclusion functor $\Tl \to \T$ admits a right
adjoint.  Then the pair $(\Tl, (\T^{\leq -1})^\perp)$  is a $t$-structure (see \cite{KV}).  \emph{We shall slightly
abuse notations and refer to such a subcategory $\Tl$ as a
$t$-structure.}
\subsection{{}}\label{roosterlabel-1} In this section we shall
discuss a $t$-structure $\Tl_\Ac$ on $\TA$ which is the strictly
full subcategory generated by
$$\{X \in \TA\mid   X^i=0 \hbox{ for } i>0\}.$$
This $t$-structure is completely parallel to the
$t$-structure $\TlA \cap \T^b_\Ac$ on the bounded homotopy
category $\T^b_\Ac$ that was introduced in \cite{BJ}, Section 2.2.
They have the same heart. For the purposes of this paper it turns
out that we need to work in the unbounded category $\TA$ however.

\begin{lem}\label{TlAc is t-structure lem}
$\Tl_\Ac$ is a $t$-structure on $\TA$.
\end{lem}
\begin{proof}
The argument from \cite{BJ}, Proposition 2.1, works also in $\TA$,
but for the sake of completeness we include a proof here. Clearly,
$\Tl_\Ac$ is closed under left shifts. To see that $\Tl_\Ac$  is
closed under extensions, let $A \to B \to C \overset{+1}{\longrightarrow}$ be
a triangle in $\TA$ with $A, C \in \Tl_\Ac$. This gives a morphism $C[-1] \to A$ in $\TA$. Chose a lift $f: C[-1] \to A$ of it to $\operatorname{C}^{-}(\Ac)$.
Thus $B \cong \Cone(f)$ in $\TA$. Since $\Cone(f)^i = C^i \oplus
A^i$ we conclude that $B \in \Tl_\Ac$. Define  a functor
$\taulA: \TA \to \TlA$ by
 $$\taul_\Ac X = [\ldots  \to X^{-2} \to X^{-1} \to \Ker (X^0 \to X^1) \to 0].$$
 It is straightforward to verify that $\taulA$ is right adjoint to the inclusion $\TlA \to \TA$.
\end{proof}
It is easy to see that $\Tg_\Ac$ is the strictly full subcategory
generated by 
$$\{X \in \TA\mid X^i = 0 \hbox{ for } i <-2 \hbox{ and }
H^{-2}(X) = H^{-1}(X) = 0\}$$ and that the truncation $\taugA$ is
given by
$$\taugA(X) = [0 \to \Ker d^{-1}_X \to X^{-1} \to X^0 \to \ldots ].$$

\medskip

\noindent Therefore the heart $\A_{\Ac}$ is the abelian category
whose objects are (homotopic to) complexes $[X^{-2} \to X^{-1} \to X^0]$ with no
cohomology except in degree $0$. Morphisms are morphisms of
complexes modulo homotopy.

For a morphism $f: X \to Y$ in $\A_{\Ac}$ let $\widetilde{f}: X
\to Y$ be a lift to a morphism in $\operatorname{C}^{-}(\Ac)$. We
have
\begin{equation}\label{KerA}
\Ker f \df \taulA(\Cone(\widetilde{f})[-1]) = [X^{-2} \to X^{-1}
\oplus Y^{-2} \to X^{0} \times_{Y^0} Y^{-1}],
\end{equation}
$$
\Coker f \df \taugA(\Cone(\widetilde{f})) = [X^0 \times_{Y^0}
Y^{-1} \to X^{0} \oplus Y^{-1} \to Y^0].
$$
Note that the localization functor $\TA \to \D(\Ac)$ maps $\TlA$
to the standard $t$-structure $D^{\leq 0} = \{X \in \D(\Ac)\mid\,
H^i(X) = 0 \hbox{ for }  i>0\}$ whose heart is $\Ac$. Thus the
functor $H^0: \AA \to \Ac$ is exact.

\medskip

\subsection{{}}\label{roosterlabel} In this section we recall the
most important properties of the abelian category $\AA$ from
\cite{BJ} and also prove an extension of one of them in Lemma \ref{roosterlabel lem}. Proofs for
the properties listed below can be found in \cite{BJ}, Corollary 2.5, Corollary 3.4, Proposition
4.6  and Section 4.
\begin{enumerate}
\item If $X^0$ is non-projective and indecomposable then $X^{-2} \hookrightarrow
X^{-1} \twoheadrightarrow X^0$ is an AR sequence in the
traditional sense iff $[X^{-2} \to X^{-1} \to X^0]$ is simple in
$\A_\Ac$.

\item The functor $\P: \Ac \to \Proj(\AA), \, M \mapsto \P_M \df M[0]$,
is an equivalence of categories. $\AA$ has enough
projectives and hence $\TA \cong \K(\Proj(\Ac))  \cong \D(\AA)$.

\item  There are exact sequences $0 \to \P_A \to \P_B \to \P_\C
\to [A \to B \to C] \to 0$; hence $\operatorname{gl.dim}(\AA) \leq
2$. (With equality unless $\Ac$ is semi-simple.)

\item If $\Ac$ has enough injectives then $\AA$ has enough
injectives and they are all of the form $[A \to I \to J]$ where
$I,J \in \Inj(\Ac)$.
\end{enumerate}

Assume furthermore that $\Ac = \mod(R)$. Then we have:

\begin{enumerate}
\item[(5)] $\Hom$'s in $\AA$ are finitely generated $k$-modules.
(However
 $\AA$ is neither a noetherian nor an artinian category unless $R$ has
finite representation type.)

\item[(6)] Let $M \in \Ac$ be indecomposable. Then $\P_M$ has a
unique simple quotient $\L^\Ac_M$ in $\AA$. If $M \notin \Proj(\Ac)$ then
$\L^\Ac_M$ equals an AR sequence $[\DTr M \to N \to M]$. If $M \in
\Proj(\Ac)$ then $\L^\Ac_M = [0 \to \rad M \to M]$.

\item[(7)] Let $M \in \Ac$. Then there is a unique object $\SA
\P_M \in \Inj(\AA)$ such that
$$\Dd \Hom_{\AA}(\P_M, \- ) \cong \Hom_{\AA}(\- , \SA \P_M).$$ This defines a functor $\SA: \Proj(\AA) \to \Inj(\AA)$ which is an equivalence.

\item[(8)] If $M \in \Ac$ contain no projective direct summand we
have $\SA \P_M = [\DTr M \to I \to J]$ where $0 \to \DTr M \to I
\to J$ is a minimal injective corepresentation. If $M \in
\Proj(\Ac)$ we have $\SA \P_M = \P_{\Dd(M^*)}$. (Note that in the
latter case $M^*$ is a projective right $R$-module so that
$\Dd(M^*) \in \Inj(\Ac)$ and hence $\P_{\Dd(M^*)} \in \Inj(\AA)$.)
\end{enumerate}

Later on we shall need an extension of the isomorphism in item (7)
from $\AA$ to $\TA$. The following result can be deduced from the
extension of $\SA$ to $\T^b_\Ac$ established in \cite{BJ},
Proposition 4.6.  We opted however to give an argument
that works directly in $\TA$.
\begin{lem}\label{roosterlabel lem} There is a functorial isomorphism
$\Dd \Hom_{\TA}(\P_M, V) \cong \Hom_{\TA}(V, \SA \P_M)$ for $M \in
\Ac, V \in \TA$. It extends the isomorphism in  item (7).
\end{lem}
\begin{proof} We first show that
\begin{equation}\label{serreextended1}
\P_M \in {}^\perp(\TA^{\geq 1}) \cap {}^\perp(\TA^{\leq -1}) \
\hbox{ and }  \ \SA \P_M \in (\TA^{\geq 1})^\perp \cap (\TA^{\leq
-1})^\perp.
\end{equation}
We have  $\P_M \in \AA \subset \TA^{\leq 0} = {}^\perp(\TA^{\geq
1})$ and $\P_M \in {}^\perp(\TA^{\leq -1})$ holds trivially since
$\P_M$ has no components in negative degrees. Also, we have $\SA
\P_M \in \AA \subset \TA^{\geq 0} = (\TA^{\leq -1})^\perp$.

The only assertion that needs a proof is $\SA \P_M \in (\TA^{\geq
1})^\perp$. Since $\SA \P_M$ is injective we have  by item (4)
above that $\SA \P_M = [A \overset{d^{-2}}{\longrightarrow} I
\overset{d^{-1}}{\longrightarrow} J]$ with vanishing cohomology in
negative degrees, $A \in \Ac$ and $I, J \in \Inj(\Ac)$. Let $X \in
\TA^{\geq 1}$ and $f \in \Hom_{\TA}(X, \SA \P_M)$. We must prove
that $f=0$. We may assume
$$X= \ldots \to 0 \to X^{-1} \overset{d^{-1}_X}{\longrightarrow} X^0
\overset{d^{0}_X}{\longrightarrow} X^1 \to \ldots,$$ with
vanishing cohomology in degrees $\leq 0$. Hence $f$ is given by a
pair $(f^{-1}: X^{-1} \to I, f^{0}: X^0 \to J)$ such that
$d^{-1}f^{-1} = f^0d^{-1}_X$. Since $d^{-1}_X$ is injective and
$I$ is injective there is a map $\alpha^0: X^0 \to I$ such that
$\alpha^0 d^{-1}_X = f^{-1}$. Therefore $(f^0 -
d^{-1}\alpha^0)\circ d^{-1}_X = 0$ and hence the injectivity of
$J$ shows that there is a map $\alpha^1: X^1 \to J$ such that $f^0
- d^{-1}\alpha^0 = \alpha^1d^0_X$. Hence $\alpha = (\alpha^{0},
\alpha^1)$ defines a homotopy $f \sim 0$.

Consider now the triangles $\tau^{\leq 0}_\Ac V \to V \to
\tau^{\geq 1}_\Ac V \overset{+1}{\longrightarrow}$ and $\tau^{\leq -1}_\Ac V
\to \tau^{\leq 0}_\Ac V \to \tau^{\leq 0}_\Ac \tau^{\geq 0}_\Ac V
\overset{+1}{\longrightarrow}$. By \eqref{serreextended1} there
are natural isomorphisms $$\Hom_{\TA}(\P_M,V)
\overset{\sim}{\leftarrow} \Hom_{\TA}(\P_M,\tau^{\leq 0}_{\Ac} V)
\overset{\sim}{\rightarrow} \Hom_{\TA}(\P_M, \tau^{\leq
0}_{\Ac}\tau^{\geq 0}_{\Ac} V)$$ and
 $$\Hom_{\TA}( V, \SA \P_M)
\overset{\sim}{\leftarrow} \Hom_{\TA}(\tau^{\leq 0}_{\Ac} V, \SA
\P_M) \overset{\sim}{\rightarrow}  \Hom_{\TA}(\tau^{\leq 0}_{\Ac}
\tau^{\geq 0}_{\Ac}V, \SA \P_M).$$ On the other hand, since
$\tau^{\leq 0}_\Ac \tau^{\geq 0}_\Ac V \in \AA$ we get from item
(7) above the isomorphism $$\Dd \Hom_{\TA}(\P_M, \tau^{\leq 0}_\Ac
\tau^{\geq 0}_\Ac V) \cong \Hom_{\TA}(\tau^{\leq 0}_\Ac \tau^{\geq
0}_\Ac V, \SA \P_M).$$ The lemma follows.
\end{proof}

\section{Higher AR theory and a $t$-structure on $\TC$}\label{big t Section}
We here begin to study higher AR theory on an additive category
$\C$ by means of a $t$-structure on the triangulated category
$\TC$. Throughout this paper $\C$ will denote a full subcategory
of the abelian category $\Ac$. Thus $\TC$ is a full triangulated
subcategory of $\TA$.

\subsection{{}} \label{ACdefsec}

\begin{defi}\label{big t Section defi}
Let $M \in \Ac$ and $X \in \TA$.
\end{defi}
 \begin{enumerate}
\item A $\C$-\emph{cover} (often called a surjective $\C$-precover
in the literature) of $M$ is a surjective $\Ac$-morphism $C
\twoheadrightarrow M$, where $C \in \C$, that induces a surjection
$\Hom_{\C}(\- , C) \twoheadrightarrow \Hom_{\Ac}(\- , M)|_{\C}$.

\item A $\C$-\emph{resolution} of $M$ is an exact sequence $\ldots
\to C^{-n} \to \ldots \to C^0 \tto M$ such that $\ldots \to
\Hom_\C( \- , C^{-n}) \to \ldots \to \Hom_\C( \- , C^0) \tto
\Hom_{\Ac}(\- , M)|_\C$ is exact.

\item A $\C$-\emph{approximation} of $X$  is a quasi-isomorphism
 $X_\C \to X$ which induces an isomorphism $\Hom_{\TC}(\- , X_\C) \isoto \Hom_{\TA}(\- , X)|_{\TC}$, where $X_\C \in \TC$.

\item Dually, we define a contravariant  $\C$-resolution (resp. a
contravariant $\C$-approximation)
 to be a $\C^{op}$-resolution (resp. a $\C^{op}$-approximation in $\T_{\C^{op}}$).
\end{enumerate}
\begin{rem}\label{projrem}
Assume that $\Ac$ has enough projectives and that $\Proj(\Ac)
\subseteq \C$. If $X_\C \to X$ is a morphism in $\TA$, with $X_\C
\in \TC$, such that $\Hom_{\TC}(\-, X_\C) \to \Hom_{\TA}(\- ,
X)|_\C$ is an isomorphism, then  $X_\C \to X$ is a
quasi-isomorphism. This follows from the isomorphisms
$\Hom_{\TC}(P[n], X_\C) \isoto \Hom_{\TA}(P[n], X)|_\C$ for all $P
\in \Proj(\Ac)$, $n \in \mathbb{Z}$.
\end{rem}

\begin{rem}\label{defagainrem} Whenever a $\C$-approximation $X_\C \to X$ exists it is unique up to
canonical isomorphism. This follows from the Yoneda lemma because
if $X'_\C \to X$ is another $\C$-approximation then we are given
isomorphisms $\Hom_{\TC}(\- , X_\C) \cong \Hom_{\TA}(\- ,
X)|_{\TC} \cong \Hom_{\TC}(\- , X'_\C)$ and hence an isomorphism
$X_\C \cong X'_\C$ over $X$. Note also that if $C
\twoheadrightarrow M$ is a $\C$-cover then any map $C' \to M$
factors through it. Therefore $\C$-resolutions behave pretty much
as projective resolutions.
\end{rem}
\begin{lem}\label{big t Section lem}
The following conditions are equivalent:
\begin{enumerate}
\item Each $M \in \Ac$ has a $\C$-cover. \item Each $M \in \Ac$
has a $\C$-resolution. \item Each $X \in \TA$ has a
$\C$-approximation.
\end{enumerate}
\end{lem}
\begin{proof}\noindent (1) $\implies$ (2). Let $C^0 \to M$ and $C^{-1} \to \Ker(C^0 \to M)$ be $\C$-covers
and define inductively $\C$-covers $C^{-n-2}\to \Ker(C^{-n-1} \to C^{-n})$. Then $C^\bullet \to M$ is a
$\C$-resolution because $\Hom_\Ac(\- , \-)$ is left exact.

\medskip
\noindent (2) $\implies$ (3). A $\C$-approximation $X_\C \to X$
can be inductively constructed by Godement's method: Let $N$ be he
largest index such that $X^N \neq 0$. We define $X^n_\C = 0$ for
$n>N$ and let $X^N_\C \in \C$ be a $\C$-cover of $X^N$. Let $n
\leq N$ and assume that $X^m_\C$ and morphisms $d^m: X^m_\C \to
X^{m+1}_\C$ have been constructed for all $m \geq n$. Let $F^{n-1}
= \Ker d^n \times_{X^n} X^{n-1}$ and let $X^{n-1}_\C \to F^{n-1}$
be a $\C$-cover. The composition $X^{n-1}_\C \to F^n \to \Ker d^n
\hookrightarrow X^n_\C$ defines $d^{n-1}$. It is straightforward
to construct the morphism $X_\C \to X$ and to verify that it is a
$\C$-approximation.

\medskip
\noindent  (3) $\implies$ (1).  Let $M \in \Ac$ and pick a
$\C$-approximation $M_\C \to M[0]$. We claim that the induced
map $M^0_\C \to M$ is a $\C$-cover. To see this note that the
isomorphisms $H^0(M_\C) \to M$ and $H^0(\Hom_{\TA}( \-
,M_\C)) \isoto \Hom_\Ac( \- ,M)$ implies that the natural
maps $\Ker(M^0_\C \to M^1_\C) \to M$ and $\Ker (\Hom_\Ac( \- ,M^0_\C) \to
\Hom_\Ac( \- ,M^1_\C)) \longrightarrow \Hom_\Ac( \- , M)$ are
surjective. Thus, $M^0_\C \to M$ and $\Hom_\Ac( \- ,M^0_\C) \to
\Hom_\Ac( \- , M)$ are surjective as well.
\end{proof}
Assume that every object of $\Ac$ admits a $\C$-resolution. Then
the $\C$-\emph{dimension} of $M \in \Ac$ is defined to be the
smallest number $n \in \mathbb{N} \cup \{\infty\}$ such that there
is a $\C$-resolution of the form $0 \to C^{-n} \to \ldots \to C^0
\to M \to 0$. The $\C$-dimension of $\Ac$ is the supremum of the
$\C$-dimensions of the objects of $\Ac$.

\begin{cor}\label{big t Section cor} Let $V \in \TA$ and suppose that $V^i = 0$ for all $i>k$. Then $V$ has a $\C$-approximation $V_\C$ such that $V^i_\C = 0$ for $i>k$.
If furthermore $\Ac$ has $\C$-dimension $n$ and $V^i = 0$ for all
$i$ outside some interval $[m,k]$ then $V^i_\C = 0$ for $i \notin
[m-n,k]$.
\end{cor}
\begin{proof} This is clear from the explicit construction of  a $\C$-approximation in the proof of $(2) \implies (3)$ in the above lemma.
\end{proof}

\begin{defi}\label{big t Section defi2} $\C$ is called $\Ac$-\emph{approximating} if each object of $\TA$ admits a $\C$-approximation.
Dually, $\C$ is called \emph{contravariantly} $\Ac$-approximating
if $\C^{op}$ is $\Ac^{op}$-approximating. If $\C$ is both
$\Ac$-approximating and contravariantly $\Ac$-approximating we
call it \emph{functorially finite}.

In the case when $\Ac = \mod(R)$ we write $\Dd \Ac = \mod(R^{op})$
and $\Dd \C = \{\Dd M\mid M \in \C\}$. Note that if $\Ac =
\mod(R)$ then $\C$ is contravariantly $\Ac$-approximating $\iff$
$\Dd \C$ is $\Dd \Ac$-approximating. We say  $R$-approximating
instead of $\Ac$-approximating when $\Ac = \mod(R)$.
\end{defi}
\medskip

\noindent {\bf{Guiding example.}} Let $\C = \Proj(\Ac)$. Then a
$\C$-resolution is a projective resolution and $\C$ is
$\Ac$-approximating iff $\Ac$ has enough projectives.

\begin{lem}\label{ACdefsec lem} Assume that $\C$ is $\Ac$-approximating. Then the inclusion functor
$\TC \hookrightarrow \TA$ has a right adjoint $\pi_\C: \TA \to
\TC$ which is a triangulated functor. The adjunction morphism $(\TC \hookrightarrow \TA) \circ
\pi_\C \to Id_{\TA}$ defines a $\C$-approximation $\pi_\C X \to X$
for each $X \in \TA$. Moreover, we can (and will) define $\pi_\C$
such that $\pi_\C |_{\TC} = Id_{\TC}$.
\end{lem}
\begin{proof}
Choose for each $X \in \TA$ a $\C$-approximation $\pi_\C X  \to X$
such that for $X \in \TC$ we have $\pi_\C X = X$.  Observe that
for each morphism $f:X \to Y$ in $\TA$ there is a unique morphism
$\pi_\C f: \pi_\C X \to \pi_\C Y$ that makes $\pi_\C$ a functor
and $(\TC \hookrightarrow \TA) \circ \pi_\C \to Id_{\TA}$ a
natural transformation. By construction  $\pi_\C$  is right
adjoint to $(\TC \hookrightarrow \TA)$ and the previous natural
transformation is the adjunction morphism.
\end{proof}
\begin{Convention}\label{ACdefsec conv}
Whenever $\C$ is $\Ac$-approximating we shall tacitly assume that
choices of $\C$-approximations have been made defining a functor
$\pi_\C: \TA \to \TC$ which is right adjoint to the inclusion
$\TC \hookrightarrow \TA$, as in Lemma \ref{ACdefsec lem}. We write $X_\C = \pi_\C X$ for $X
\in \TA$.
\end{Convention}
Note that $\pi_\C \TlA \subseteq \TlA$ by corollary \ref{big t Section cor}.
\begin{defi}\label{ACdefsec defi}
Put $\Tl_\C \df  \TC \cap \Tl_\Ac$. Thus an object of  $\TlC$ is isomorphic to a complex $X \in \TC$ such that $X^i=0$ for
 $i>0$.
\end{defi}
\begin{proposition}\label{ACdefsec prop} Assume that $\C$ is $\Ac$-approximating.

\textbf{i)}  $\TlC$ is a $t$-structure with heart $\AC$ and
truncation functor $\taul_\C \df \pi_\C \circ \taulA|_{\TC}$.

\textbf{ii)}  Assume that  $\Ac$ has finite $\C$-dimension. Then
$\T^b_\C \cap \TlC$ is a $t$-structure on $\T^b_\C$ with heart
equivalent to $\AC$.
\end{proposition}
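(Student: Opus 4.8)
The plan is to verify the non-standard $t$-structure axiom of \S\ref{properties of BJ} (existence of the indicated right adjoint) and then to analyze the heart; the subtle point will be that, under the finite-dimension hypothesis, objects of the heart are bounded complexes.

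\textbf{(i).} The subcategory $\TlC$ is full, additive, and satisfies $\TlC[1]=\{X\in\TC:X^i=0\text{ for }i>-1\}\subseteq\TlC$, so it remains only to check that $\taul_\C\df\pi_\C\circ\taulA|_{\TC}$ is right adjoint to $\TlC\hookrightarrow\TC$. First $\taul_\C$ lands in $\TlC$: $\taulA X$ has no positive-degree terms and the inductive construction of $\pi_\C$ adds none. Then, for $Y\in\TlC$ and $X\in\TC$, one strings together the natural isomorphisms
\[
\Hom_{\TC}(Y,\taul_\C X)\cong\Hom_{\TA}(Y,\taulA X)\cong\Hom_{\TA}(Y,X)=\Hom_{\TC}(Y,X),
\]
the first because $\pi_\C(\taulA X)\to\taulA X$ is a $\C$-approximation and $Y\in\TC$, the second because $\taulA$ is right adjoint to $\TlA\hookrightarrow\TA$ and $Y\in\TlC\subseteq\TlA$, the last by fullness of $\TC$ in $\TA$. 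By the Lemma of \S\ref{properties of BJ}, $(\TlC,\TgC)$ with $\TgC\df(\TlC[1])^\perp$ is then a $t$-structure with heart $\AC=\TlC\cap\TgC$.

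\textbf{(ii).} Let $n$ be the $\C$-dimension of $\Ac$. First I would show that $\taul_\C$ restricts to an endofunctor of $\T^b_\C$: if $X\in\T^b_\C$ has terms in $[m,k]$ then $\taulA X$ has terms in $[m,\min(k,0)]$, and by the preceding Corollary its $\C$-approximation $\pi_\C(\taulA X)$ has terms in $[m-n,\min(k,0)]$ --- this is the one place the finite $\C$-dimension is used. Hence, by (i) and fullness of $\T^b_\C$ in $\TC$, $\taul_\C$ restricts to a right adjoint of $\T^b_\C\cap\TlC\hookrightarrow\T^b_\C$, so $\T^b_\C\cap\TlC$ is a $t$-structure on $\T^b_\C$. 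Next I would identify its heart with $\AC\cap\T^b_\C$: the inclusion $\supseteq$ is immediate from fullness, and for $\subseteq$, given $X$ in the heart and an arbitrary $Z\in\TlC[1]$, replacing $Z$ by a brutal truncation $Z'\in\T^b_\C\cap\TlC[1]$ deep enough that $\Hom_{\TC}(Z,X)\cong\Hom_{\TC}(Z',X)$ forces $\Hom_{\TC}(Z,X)=0$, i.e.\ $X\in\TgC$. So everything reduces to the inclusion $\AC\subseteq\T^b_\C$ --- that objects of the heart are bounded complexes --- which I expect to be the main obstacle.

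For $\AC\subseteq\T^b_\C$ the plan is as follows. Let $X=[\cdots\to X^{-1}\to X^0]\in\AC$. Since $X\in\TgC$ and $C[j]\in\TlC[1]$ for $j\ge1$, we have $\Hom_{\TC}(C[j],X)=0$ for every $C\in\C$ and $j\ge1$; unwinding this group in $\TC$ shows that every morphism $C\to\Ker d_X^{-j}$ factors through $d_X^{-j-1}$, hence has image in $\Jm d_X^{-j-1}$. Applying this to a surjection $C\twoheadrightarrow\Ker d_X^{-j}$ with $C\in\C$ (one exists because $\C$ is $\Ac$-approximating) gives $\Ker d_X^{-j}=\Jm d_X^{-j-1}$, so $H^{-j}(X)=0$ for all $j\ge1$; as $X$ has no positive-degree terms, $X$ has cohomology concentrated in degree $0$. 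The same vanishing shows that $[\cdots\to X^{-3}\to X^{-2}]$, augmented by the map $X^{-2}\to\Ker d_X^{-1}$ induced by $d_X^{-2}$, is a $\C$-resolution of $\Ker d_X^{-1}$. Since $\Ker d_X^{-1}$ has $\C$-dimension $\le n$, it also admits a $\C$-resolution of length $\le n$, and, $\C$-resolutions being unique up to homotopy equivalence, $[\cdots\to X^{-3}\to X^{-2}]$ is isomorphic in $\TC$ to a bounded complex. The brutal-truncation triangle $[\cdots\to X^{-2}]\to X\to[X^{-1}\to X^0]\to[\cdots\to X^{-2}][1]$ then presents $X$ as the cone of a morphism between bounded complexes, whence $X\in\T^b_\C$ ($\T^b_\C$ being a triangulated subcategory of $\TC$). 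Therefore the heart of $\T^b_\C\cap\TlC$ equals $\AC\cap\T^b_\C=\AC$.
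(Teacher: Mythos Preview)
Your proof is correct. Part \textbf{(i)} is exactly the paper's one-line argument, spelled out: the adjunction chain you write down is precisely what underlies the paper's assertion that $\pi_\C\circ\taulA|_{\TC}$ is right adjoint to the inclusion.

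For part \textbf{(ii)} the paper is extremely terse: it records that $\pi_\C(\T^b_\Ac)=\T^b_\C$ under the finite-dimension hypothesis, deduces that $\taul_\C$ restricts, and dismisses the heart identification with ``the rest follows''. Your argument is genuinely more explicit and self-contained. The brutal-truncation trick for showing that the bounded heart equals $\AC\cap\T^b_\C$ is clean and does not appear in the paper. Your route to $\AC\subseteq\T^b_\C$ --- showing directly that $\Hom_{\TC}(C[j],X)=0$ forces $H^{-j}(X)=0$, then recognising $[\cdots\to X^{-2}]$ as a $\C$-resolution of $\Ker d_X^{-1}$ and invoking uniqueness of $\C$-resolutions --- is a nice hands-on argument. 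The paper's intended shortcut is almost certainly the identity $\AC=\pi_\C(\AA)$ (Lemma of \S\ref{Section AAo}), which combined with $\AA\subseteq\T^b_\Ac$ and $\pi_\C(\T^b_\Ac)\subseteq\T^b_\C$ gives $\AC\subseteq\T^b_\C$ in one line; but that lemma is proved only afterwards, so your argument has the virtue of not borrowing from later sections. One cosmetic point: strictly speaking you show that every $X\in\AC$ is \emph{isomorphic in $\TC$} to an object of $\T^b_\C$, not that it literally lies in $\T^b_\C$; this is exactly what is needed for ``heart equivalent to $\AC$'', so there is no gap.
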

\begin{proof} \textbf{i)} Clearly $\TlC$ is closed under left shifts. It is closed under extension by the same argument as in Lemma \ref{TlAc is t-structure lem}.
Finally the inclusion $\TlC \to \TC$ has the right adjoint $\pi_\C
\circ \taulA|_{\TC}$.

\medskip

\noindent \textbf{ii)} The hypothesis implies that
$\pi_\C(\T^b_\Ac) = \T^b_\C$. It follows as above that the
inclusion  $\T^b_\C \cap \TlC \hookrightarrow \T^b_\C$ has a right
adjoint and the rest follows.
\end{proof}

\subsection{{}}\label{partial converse sds} There is a partial strengthening of and a converse to Proposition
\ref{ACdefsec prop}.\label{partial converse}
\begin{proposition}\label{partial converse sds prop} \textbf{i)} Assume that the kernel in $\Ac$ of each $\C$-morphism
admits a $\C$-cover. Then $\TlC$ is a $t$-structure.

\textbf{ii)} Assume that $\TlC$ is a $t$-structure.  Let $K$ be the kernel in $\Ac$ of a $\C$-morphism $f: M \to N$.
Then there is a $K_\C \in \TC$ and a $\TA$-morphism
$K_\C \to K[0]$ which induces an isomorphism $\Hom_{\TC}(\- , K_\C)
\isoto \Hom_{\TA}(\- , K[0])|_{\TC}$. Therefore if $K_\C \to K[0]$ in
addition happens to be a quasi-isomorphism then it is a
$\C$-approximation.

\textbf{iii)}   Assume that $\TlC$ is a $t$-structure, $\Ac$ has
enough projectives, $\Proj(\Ac) \subseteq \C$ and any object of
$\C$ is isomorphic to the kernel of some $\Ac$-morphism. Then $\C$
is $\Ac$-approximating.
\end{proposition}
\begin{proof} \textbf{i)}  The argument of   $(2) \implies (3)$ of Lemma
\ref{big t Section lem} applies in this situation and shows that $ \taulA X$ admits a $\C$-approximation $(\taulA X)_\C \to \taulA X$, for $X \in \TC$.
By the same argument as in the proof of Lemma \ref{ACdefsec lem} this allows us to define a functor $\taul_\C: \TC \to \TlC$,
satisfying $\taul_\C X \cong (\taulA X)_\C$, which is right adjoint to the inclusion $\TlC \to
\TC$.

\medskip

\noindent \textbf{ii)} Let $X$ be the complex $M
\overset{f}{\longrightarrow} N$ with $M$ in degree $0$ so that
$K[0] = \taulA X$. Put $K_\C \df \taul_\C X$. Then $\taulA K_\C =
K_\C$  and therefore we get by applying $\taulA$ to the canonical
morphism $K_\C \to X$ a morphism $K_\C \to  K[0]$. We must show
that the latter defines isomorphisms
\begin{equation}\label{star condition one}
 \Hom_{\TC}(V, K_\C) \isoto \Hom_{\TA}(V, K[0]), \hbox{
for all } V \in \TC.
\end{equation}
It is
enough to show that \eqref{star condition one} holds when $V =
C[i]$, for $C \in \C$ and $i \in \Z$. Assume first that $i \geq
0$. Then $C[i] \in \TlC$ and we therefore obtain
$$\Hom_{\TC}(C[i], K_\C) = \Hom_{\TC}(C[i], \taul_\C X) \cong \Hom_{\TC}(C[i], X) =
$$
$$
 \Hom_{\TA}(C[i], X) \cong   \Hom_{\TA}(C[i], \taulA X) = \Hom_{\TA}(C[i],K[0]).
$$
On the other hand if $i <0$ we have $\Hom_{\TC}(C[i], K_\C) =
\Hom_{\TA}(C[i], K[0]) = 0$ since $K_\C$ is (isomorphic to) a complex with no
non-zero components in strictly positive degrees.

\medskip

\noindent \textbf{iii)} This follows from \textbf{ii)} and Remark
\ref{projrem}.
\end{proof}
Although it isn't strictly necessary in order to obtain a $t$-structure we shall always work with the
notion of an $\Ac$-approximating subcategory rather than with the
weaker assumptions of \textbf{i)} in the proposition.

\subsection{{}}\label{Section AAo} For the rest of Section \ref{big t Section} we assume that $\C$ is $\Ac$-approximating.
We shall describe the $t$-structure $\TlC$ a bit closer.
\begin{lem}\label{Section AAo lem} \textbf{i)} We have $\taug_\C  \cong \pi_\C \circ \taug_\Ac |_{\TC}$.
\textbf{ii)}  $\TlC = \pi_\C(\TlA)$,  $\TgC = \pi_\C(\TgA)$  and consequently
$\pi_\C(\AA) = \AC$.
\end{lem}
\begin{proof} \textbf{i)} Let $X \in \TC$.
 Applying $\pi_\C$ to the triangle $\tau^{<0}_\Ac X \to X \to
\taug_\Ac X \overset{+1}{\longrightarrow}$ we get the triangle
$$
 \tau^{<0}_\C X \to X \to \pi_\C \taug_\Ac X
\overset{+1}{\longrightarrow}.
$$
Now the triangle $\tau^{<0}_\C X \to X \to \taug_\C X
\overset{+1}{\longrightarrow}$ shows that  $\pi_\C \taugA X \cong \taug_\C X$.

\medskip

\noindent  \textbf{ii)} From the inclusions $\TlC = \pi_\C(\TlC) \subseteq \pi_\C \TlA \subseteq \TlA \cap \TC \df \TlC$ we conclude that $\TlC =\pi_\C \TlA$.
We show that $ \TgC = \pi_\C(\TgA)$.  For $X \in \TgC$ we have $X \cong \taug_\C X \cong \pi_\C \taug_\Ac X \in \pi_\C \TgA$, by  \textbf{i)}.
On the other hand we have
$$\Hom_{\TC}(\T^{\leq -1}_{\C} ,  \pi_\C ( \TgA)) = \Hom_{\TA}(\T^{\leq -1}_{\C} ,   \TgA )  = 0,$$
since $\pi_\C$ is right adjoint to the inclusion $\TC \hookrightarrow \TA$ and $\T^{\leq -1}_{\C} \subseteq \T^{\leq -1}_{\Ac} = {}^\perp \TgA $. Thus $\pi_\C ( \TgA) \subseteq (\T^{\leq -1}_{\C} )^{\perp} = \TgC$ (where the right orthogonal complement is taken in $\TC$).
\end{proof}

\subsection{{}}\label{Section AA}
Let $f:X \to Y$ be a morphism in $\AC$ and let $\widetilde{f}: X
\to Y$ be a representative of it in $\operatorname{C}^{-}(\C)$.
By the definition of (co)kernels in the heart of a $t$-structure
$$\Ker f  = \taul_\C(\Cone(\widetilde{f})[-1]) \hbox{ and } \Coker f = \taug_\C(\Cone(\widetilde{f})).$$

\begin{Convention}\label{Section AA con}  From now on when we write ``let $X \in \TlC$" or ``let $X \in \AC$" we shall tacitly assume that $X^i = 0$ for all $i>0$ and not merely that $X$ is homotopic to such a complex. The significance of this assumption is the assertion of Lemma \ref{Section AA lem} i) below.
\end{Convention}
\begin{lem}\label{Section AA lem} Let $X,Y \in \AC$.
\textbf{i)} $X \cong 0 \iff X \in \T^{\leq -1}_\C \iff X^{-1} \to X^0$ is a split epimorphism.

\textbf{ii)} $f$ is surjective if and only if $X^0 \oplus Y^{-1}
\to Y^0$ is a split epimorphism.
\end{lem}
\begin{proof} \textbf{i)}  The first equivalence holds since $\T^{<0}_\C \cap \AC = 0$.
The last equivalence holds by the definition of $\T^{\leq -1}_\C$.

\medskip

\noindent \textbf{ii)} Assume that $X^0 \oplus Y^{-1} \to Y^0$ is
a split epimorphism. Then ${\tau}^{\geq 0}_{\Ac}
\Cone(\widetilde{f}) = 0$. Thus $\Coker f = \pi_\C({\tau}^{\geq 0}_{\Ac}
\Cone(\widetilde{f})) = 0$. Conversely, assume that $\Coker f =
0$.  Then we have
$$
\Hom_{\TA}(Y^0[0], {\tau}^{\geq 0}_{\Ac} \Cone(\widetilde{f})) =
\Hom_{\TC}(Y^0[0], \Coker f) = 0.
$$
Since $\taugA \Cone(\widetilde{f}) = [X^0 \times_{Y^0} Y^{-1}  \to X^0 \oplus Y^{-1} \to Y^0]$ this implies that $X^0 \oplus Y^{-1} \to Y^0$ is a split epimorphism.
\end{proof}

\begin{proposition}\label{Section AA prop} \textbf{i)} The inclusion $\TC \hookrightarrow \TA$ is right $t$-exact; hence  $\taug_{\Ac}(\AC) \subseteq \A_{\Ac}$ and $\taug_{\Ac}: \AC \to \AA$ is right exact.

\textbf{ii)}   $\taug_{\Ac} : \AC \to \AA$ is left adjoint to
$\pi_\C: \AA \to \AC$ and the adjunction morphism $Id_{\AC} \to
\pi_\C \circ  \taug_{\Ac} |_{\AC}$ is an isomorphism.

\textbf{iii)} $\pi_\C: \AA \to \AC$ is exact.

\textbf{iv)} Let $\bA =\Ker( \pi_\C|_{\AA})$. Let $\mu: \taugA \circ \pi_\C|_{\AA} \to Id_{\AA}$
be the adjunction morphism and let $\bB$ be the set of objects isomorphic to $\Coker(\mu_V)$, for $V \in
\AA$ (where the cokernel is calculated in $\AA$). Let $\bC$ be the set of objects isomorphic to $[X   \to C \times Y \to Z] \in \AA$, where $C \to Z$ is a  $\C$-cover.
Then $\bA = \bB = \bC$.
\end{proposition}
\begin{proof} \textbf{i)} holds by definition since $\Tl_\C \subseteq \Tl_\Ac$.

\medskip

\noindent \textbf{ii)} For $X \in \AC, Y \in \AA$ we have
$$
\Hom_{\AC}(X, \pi_\C Y) \cong \Hom_{\TA}(X, Y) \cong
\Hom_{\TA}(\taugA X, Y) = \Hom_{\AA}(\taugA X, Y)
$$
where the second isomorphism holds since $\Hom_{\TA}(\tau^{<0}_\Ac
X, Y) = 0$ by the assumption on $Y$. This proves the adjointness
of the functors. Similarly, for all $Z \in \AC$ we have
$$\Hom_{\TA}(Z,X) \cong \Hom_{\TA}(Z, \taug_\Ac X) \cong
\Hom_{\TC}(Z, \pi_\C \taug_\Ac X)$$ where the first isomorphism
follows from the fact that $\Hom_{\TA}(Z, \tau^{< 0}_{\Ac} X) =
\Hom_{\TC}(Z, \tau^{< 0}_{\C} X)$ $= \Hom_{\TC}(Z, 0) = 0$. Hence
$X \isoto \pi_\C \taug_\Ac X$.

\medskip

\noindent \textbf{iii)} The functor $\pi_\C: \AA \to \AC$ is left
exact since it has a left adjoint. We prove it is right exact. Let
$X \tto Y$ be a surjection in $\AA$. Then $X^0 \oplus Y^{-1} \to
Y^0$ is a split epimorphism (by Lemma \ref{Section AA lem} ii) applied to the special case $\C$ equals $\Ac$).  Using Godement's explicit
construction of $\C$-approximations as in the proof of $(1) \implies (2)$ of Lemma
\ref{ACdefsec lem} we see that $X^0_\C \oplus Y^{-1}_\C \to
Y^0_\C$ is a split epimorphism as well. Hence
$X_\C \to Y_\C$ is surjective, again by Lemma \ref{Section AA lem} ii).

\medskip

\noindent \textbf{iv)} We show that $\bA \subseteq \bB$. Let $V
\in \bA$. We get the exact sequence
$$
\taugA\pi_\C V \overset{\mu_V}{\longrightarrow} V \to \Coker \mu_V \to 0
$$
Since $\pi_\C V = 0$ we have $V \cong \Coker \mu_V \in \bB$.

\medskip

\noindent We prove that $\bB \subseteq \bC$. Let $V = [U \to Y \to
Z] \in \AA$ and consider $\Coker \mu_V \in \bB$. By Godement's
construction we can chose a $\C$-approximation  $V_\C \to V$ where
$V_\C = [ \ldots \to V^{-1}_\C \overset{d^{-1}}{\longrightarrow}
V^0_\C]$ and $V^0_\C \to Z$ is a $\C$-cover.  Then $\taugA V_\C =
[\Ker d^{-1} \to V^{-1}_\C \to V^0_\C]$. We thus get $\Coker \mu_V
\cong  [X  \to C \times Y \to Z]$, where $X =V^0 \times_Z Y$ and
$C = V^0_\C$. Thus $\Coker \mu_V \in \bC$.

\medskip

\noindent We prove that $\bC \subseteq \bA$.  Let
$V=[X   \to C \times Y \to Z] \in \bC$ where $C \to Z$ is a $\C$-cover.
Then we can pick a $\C$-approximation $V_\C$ is of the form $[\ldots \to V^{-1}_\C
\oplus C \to C]$ where $V^{-1}_\C \oplus C \to C$ is a split
epimorphism; thus $V_\C = 0$ by Lemma \ref{Section AA lem} i). Hence, $V \in \bA$.
\end{proof}
\begin{rem}\label{Section AA rem} Let $\C = \Proj(\Ac)$ and assume that $\Ac$ has enough projectives. Then $\AC \cong \Ac$ and $\pi_\C = H^0: \AA \to \Ac$. This functor $H^0$
is exact since the canonical map $\TA \to \D(\Ac)$ is $t$-exact
with respect to the standard $t$-structure on the derived
category.
\end{rem}

\subsection{Projectives  in $\AC$}\label{Projectives in AC}
For $M \in \C$ recall that $\P_M \in \A_\C$ is the complex $M$
concentrated in degree $0$. This gives a fully faithful functor
$$\P: \C \to \AC, \-  M \mapsto \P_M.$$
We have
\begin{proposition}\label{Projectives in AC prop} $\P$ defines an equivalence of categories $\P: \C \isoto  \Proj(\AC)$.  $\AC$ has enough projectives.
Hence, the natural morphism $\TC \to \D(\AC)$ is an equivalence of
categories.
\end{proposition}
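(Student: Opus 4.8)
The plan is to prove the three assertions --- that $\P$ takes values in and is essentially surjective onto the projectives of $\AC$, that $\AC$ has enough projectives, and the resulting derived equivalence --- in that order, using throughout that $\AC$ is a full subcategory of $\TC=\K(\C)$, so that $\Hom_{\AC}$ is computed by homotopy classes of chain maps, together with the description of epimorphisms in $\AC$ obtained in \ref{Section AA}. Recall that $\P$ is already fully faithful.

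First I would show $\P_M$ is projective for $M\in\C$. For $X=[\cdots\to X^{-1}\to X^0]$ in $\AC$, a chain map $\P_M\to X$ is nothing but a map $M\to X^0$, and two such are homotopic precisely when they differ by an element of the image of $(d_X^{-1})_*$, so
\[
\Hom_{\AC}(\P_M,X)=\Coker\!\bigl(\Hom_\C(M,X^{-1})\xrightarrow{(d_X^{-1})_*}\Hom_\C(M,X^0)\bigr);
\]
this is a left exact functor of $X$, so it suffices to see that it carries epimorphisms to epimorphisms. If $\phi\colon X\tto Y$ is an epimorphism in $\AC$, the criterion of \ref{Section AA} provides a splitting $\sigma=(\sigma_1,\sigma_2)$, with $\sigma_1\colon Y^0\to X^0$, of the split epimorphism $X^0\oplus Y^{-1}\to Y^0$ assembled from $\phi^0$ and $d_Y^{-1}$; then $\phi^0\sigma_1$ and $\mathrm{id}_{Y^0}$ differ by a map factoring through $d_Y^{-1}$, so for every $g\colon M\to Y^0$ the element $\sigma_1 g\in\Hom_\C(M,X^0)$ has image in $\Hom_{\AC}(\P_M,Y)$ equal to the class of $g$. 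Hence $\Hom_{\AC}(\P_M,-)$ is exact and $\P_M$ is projective.

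Next, for any $X=[\cdots\to X^{-1}\to X^0]\in\AC$ the canonical map $q_X\colon\P_{X^0}\to X$ that is $\mathrm{id}_{X^0}$ in degree $0$ yields, via the same criterion, the map $X^0\oplus X^{-1}\to X^0$ with identity first component, which is visibly a split epimorphism; so $q_X$ is an epimorphism onto $X$ from the projective $\P_{X^0}$ (note $X^0\in\C$), and $\AC$ has enough projectives. If $P\in\Proj(\AC)$, then $q_P$ splits, exhibiting $P$ as a retract of $\P_{P^0}$, cut out by an idempotent $e\in\End_{\AC}(\P_{P^0})\cong\End_\C(P^0)$ (using full faithfulness of $\P$). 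Since $\C$ is Karoubi closed, the corresponding idempotent of $P^0$ splits, giving $P^0\cong N\oplus N'$ in $\C$; applying $\P$ yields $\P_{P^0}\cong\P_N\oplus\P_{N'}$ with $e$ the projection onto $\P_N$, and since split idempotents have a unique image, $P\cong\P_N$. Thus $\P\colon\C\isoto\Proj(\AC)$ is an equivalence.

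Finally, $\AC$ is abelian (being a heart) and has enough projectives, and $\P$ identifies $\C$ with $\Proj(\AC)$, hence, applied termwise to complexes, identifies $\TC=\K(\C)$ with $\K(\Proj(\AC))$; by the standard fact that for an abelian category with enough projectives the composite $\K(\Proj(\AC))\hookrightarrow\K(\AC)\to\D(\AC)$ is an equivalence (quasi-isomorphisms between bounded-above complexes of projectives being homotopy equivalences), the natural functor $\TC\to\D(\AC)$ is an equivalence of triangulated categories. I expect the only genuinely non-formal step to be the first one: extracting projectivity of $\P_M$ from the split-epimorphism form of surjections in $\AC$. Once that is in hand, the exhaustion of $\Proj(\AC)$ by $\P$ rests only on Karoubi closedness of $\C$, and the derived equivalence is routine homological algebra.
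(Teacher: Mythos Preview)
Your proof is correct and follows essentially the same approach as the paper: both arguments use the split-epimorphism criterion from \ref{Section AA} to lift a map $g\colon \P_M\to Y$ through an epimorphism $X\tto Y$ via the first component of a splitting $Y^0\to X^0\oplus Y^{-1}$, then observe that $\P_{X^0}\tto X$ supplies enough projectives and invoke Karoubi closedness of $\C$ for essential surjectivity. Your write-up is slightly more explicit (computing $\Hom_{\AC}(\P_M,X)$ as a cokernel, spelling out the idempotent-splitting step), but there is no substantive difference in method.
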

\begin{proof}  We show $\P_M$ is projective. Let $f:X \to Y$ be a surjection in $\A_\C$ and $g:\P_M \to Y$ a morphism
(thus $g$ is given by $g^0: M \to Y^0$). We must show that $g$
factors through $f$. By Lemma \ref{Section AA lem} we know that
$X^0 \oplus Y^{-1} \to Y^0$ has a splitting $s: Y^0 \to X^0 \oplus
Y^{-1}$. Let $\pi:  X^0  \oplus  Y^{-1} \to X^0$ be the projection
and let $g': \P_M \to X$ be given by $g'^0 = \pi s g^0$. Then
$fg'$ is homotopic to $g$.

Now, since any $X$ is a quotient of $\P_{X^0}$ it follows that a
projective $X$ is a direct summand in  $\P_{X_0}$. Hence $X \cong
\P_M$ for some direct summand $M$ of $X_0$.
\end{proof}
It follows from the proposition that any $V = [\ldots \to C^{-1}
\to C^0] \in \AC$ admits a projective resolution
\begin{equation}\label{gl.dim0}
\ldots \to \P_{C^{-1}} \to \P_{C^0} \to V \to 0.
\end{equation}
We obtain
\begin{cor}\label{Projectives in AC cor}
$\Ac$ has finite $\C$-dimension $n$ $\implies$ $\AC$ has finite
cohomological dimension $\leq n+2$. Conversely, the $\C$-dimension
of $\Ac$ is bounded by the cohomological dimension of $\AC$.
\end{cor}
\begin{proof} Let $V
\in \AC$. Thus $V \cong U_\C$ for some $U \in \AA$.  Since $U^i =
0$ for $i \notin [-2,0]$ we conclude that $V$ is isomorphic to an
object of the form $[C^{-n-2} \to \ldots \to C^0]$. Hence $V$ has
projective dimension $\leq n+2$ by \eqref{gl.dim0}.

The last assertion holds since the $\C$-dimension of $M \in \Ac$
equals the projective dimension of $(\P_M)_\C$ in $\AC$.
\end{proof}

\subsection{Simple objects in $\AC$}\label{simple objects}
Recall that an abelian category has enough simples if each
indecomposable projective has a simple quotient. It was shown in
\cite{BJ} that $\AA$ has enough simples when $\Ac = \mod(R)$.
\begin{proposition}\label{simple objects prop}
\textbf{i)} Let $[X \to Y \to Z]$ be simple in $\AA$. Then the
$\C$-approximation $[X \to Y \to Z]_\C$ is zero or simple in $\AC$
and $Z \in \C \implies [X \to Y \to Z]_\C$ is non-zero.

\textbf{ii)} Assume that $\Ac$ is a Krull-Schmidt category and let
$Z \in \C$ be indecomposable. Then all simple quotients of $\P_Z$
in $\AC$ are isomorphic.

\textbf{iii)} If $\AA$ has enough simples  then $\AC$ has enough
simples. In fact, if $[X \to Y \to Z]$ is a simple quotient of
$\P_Z$ in $\AA$ then $[X \to Y \to Z]_\C$ is a simple quotient of
$\P_Z$ in $\AC$, for $Z \in \C$.  All simples of $\AC$ are of this
form.

\textbf{iv)} Assume that $\Ac$ is a Krull-Schmidt category and $V
= [X \to Y \to Z] \in \AA$ is simple with $Z$ indecomposable. Then
$V_\C = 0 \iff Z \notin \C$.
\end{proposition}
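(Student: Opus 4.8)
The plan is to detect $V_\C$ against the projective generators $\P_C$, $C\in\C$, of $\AC$. One implication, $Z\in\C\Rightarrow V_\C\neq 0$, is already contained in part \textbf{i)}, so the substance is the converse $Z\notin\C\Rightarrow V_\C=0$. Since $\AC$ has enough projectives with $\{\P_C\}_{C\in\C}$ a set of generators (Proposition \ref{Projectives in AC}), I would reduce to showing $\Hom_{\AC}(\P_C,V_\C)=0$ for every $C\in\C$; and because $\pi_\C V\to V$ is a $\C$-approximation (Lemma \ref{ACdefsec}),
$$\Hom_{\AC}(\P_C,V_\C)=\Hom_{\TC}(\P_C,\pi_\C V)\cong\Hom_{\TA}(\P_C,V)=\Hom_{\AA}(\P_C,V),$$
so the task becomes: every morphism $g\colon\P_C\to V$ in $\AA$ with $C\in\C$ is zero.

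To see this I would argue by contradiction: suppose $g\neq0$. Since $V$ is simple, $g$ is an epimorphism. From the exact sequence $0\to\P_X\to\P_Y\to\P_Z\to V\to0$ recorded in property (3) of Section \ref{roosterlabel} one gets an epimorphism $q\colon\P_Z\tto V$; projectivity of $\P_C$ lets me lift $g$ through $q$, and since $\P\colon\Ac\isoto\Proj(\AA)$ is fully faithful (property (2)) the lift is $\P(\varphi)$ for some $\varphi\colon C\to Z$. Set $W:=\Jm(\P(\varphi))\subseteq\P_Z$; as $q\circ\P(\varphi)=g$ is epi we get $q(W)=V$, i.e.\ $W+\Ker q=\P_Z$. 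Now I invoke that $Z$ is indecomposable in the Krull--Schmidt category $\Ac$, so $\End_{\AA}(\P_Z)\cong\End_{\Ac}(Z)$ is local: lifting $q$ through $q|_W\colon W\tto V$ (projectivity of $\P_Z$) yields an endomorphism $\psi$ of $\P_Z$ with $q\psi=q$, and locality forces $\psi$ to be an automorphism (the alternative, $1-\psi$ invertible, would give $q=0$), whence $W=\P_Z$. Thus $\P(\varphi)$ is an epimorphism onto the projective object $\P_Z$, so it splits; hence $\varphi$ is a split epimorphism in $\Ac$ and $Z$ is a direct summand of $C$. Since $\C$ is Karoubi closed and $Z$ is indecomposable, this forces $Z\in\C$, contradicting the hypothesis. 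So $g=0$ and $V_\C=0$.

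The step I expect to be the main obstacle is passing from $W+\Ker q=\P_Z$ to $W=\P_Z$, i.e.\ the superfluousness of $\Ker q$ in $\P_Z$: this is exactly where indecomposability of $Z$ is used (through locality of $\End_{\Ac}(Z)$), and it is the reason the hypothesis ``$Z$ indecomposable'' is needed. Everything else is formal — the reduction to $\Hom$-groups, the lifting through projectives, and the Karoubi-closedness endgame. (If one prefers not to quote Section \ref{roosterlabel} for the surjection $\P_Z\tto V$, part \textbf{iii)} supplies one just as well; and the easy implication $Z\in\C\Rightarrow V_\C\neq0$ can be re-proved here by noting that the class of $1_Z$ in $\Hom_{\AA}(\P_Z,V)$ is nonzero, since otherwise $d^{-1}_V\colon Y\to Z$ would split and $V$ would be zero by Lemma \ref{Section AA}.)
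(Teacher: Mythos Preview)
Your argument for \textbf{iv)} is correct, but it follows a genuinely different route from the paper's.

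The paper argues constructively: it takes a $\C$-cover $C\to Z$, observes that neither $C\to Z$ nor $Y\to Z$ is a split epimorphism (the first because $Z\notin\C$, the second because $V\neq 0$), and uses Krull--Schmidt to conclude that $Y\oplus C\to Z$ is not a split epimorphism either. Hence $[K\to Y\oplus C\to Z]$ is a nonzero quotient of $V$; by simplicity $V$ is isomorphic to it, and this object lies in $\Ker\pi_\C$ by the explicit description in Proposition~\ref{Section AA}~iv). The converse is handled symmetrically via the same description of $\Ker\pi_\C$.

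Your approach instead detects $V_\C$ against the projective generators $\P_C$ and reduces to $\Hom_{\AA}(\P_C,V)=0$, then runs a Nakayama-style ``superfluous subobject'' argument using locality of $\End_{\Ac}(Z)$. This avoids invoking the structural description of $\Ker\pi_\C$ altogether and is arguably more self-contained; the trade-off is that the paper's proof yields the explicit isomorphism $V\cong[K\to Y\oplus C\to Z]$ as a by-product, which is informative in its own right. Both proofs use the Krull--Schmidt hypothesis in the same essential place (the local endomorphism ring of $Z$), just packaged differently.
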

\begin{proof}  \textbf{i)} Write $V = [X \to Y \to Z]$. To show that $V_\C$ is simple or zero it is enough to show that any non-zero map $M \to V_\C$ in $\AC$ is surjective. By adjointness $M \to V_\C$ corresponds to a non-zero map $\taugA M \to V$ which is surjective since
$V$ is simple. Thus, $M \cong \pi_\C \taugA M \to V_\C$ is also
surjective since $\pi_\C$ is exact.

Assume now that $Z \in \C$. Then we can take $V_\C = [\ldots \to C^{-2} \to
C^{-1} \to Z]$,  $C^{i} \in \C$, and the map $V_\C \to V$ is $Id_Z$ in degree $0$. Since $V \neq
0$ we know that $Y \to Z$ is not a split epimorphism. It follows that
$C^{-1} \to Z$ can not be a split epimorphism. Hence $V_\C$ is
non-zero.

\medskip

\noindent \textbf{ii)} Let $L$ and $L'$ be simple quotients of $\P_Z$ and let $K = \Ker(\P_Z \to L)$ and $K' = \Ker(\P_Z \to L')$.
Then $\Jm (K \oplus K' \to \P_Z) \neq 0$ because $K^0
\oplus K'^0 \to Z$ cannot be a split epimorphism by the
Krull-Schmidt Theorem (since neither $K^0 \to Z$ nor $K'^0 \to Z$
are split epimorphisms). Since $\Jm (K \oplus K' \to \P_Z)$ is a quotient of both $L$ and $L'$ we conclude that $L \cong L'$.
\medskip

\noindent \textbf{iii)} This follows from \textbf{i)}.

\medskip

\noindent \textbf{iv)} Assume that $Z \notin \C$ and let $C \to Z$
be a $\C$-cover. Since neither this map nor $Y \to Z$ are split
the Krull-Schmidt Theorem gives that $Y \oplus C \to Z$ is not
split.  Thus, $[K \to Y \oplus C \to Z]$ (where $K = \Ker(Y \oplus
C \to Z)$) is a non-zero quotient object of $V$ and hence
isomorphic to $V$ since $V$ is simple. Thus $V_\C = 0$ by
Proposition \ref{Section AA prop} iv).

Conversely, assume that $Z \in \C$. Then we saw during the proof of Proposition \ref{Section
AA prop} iv) that $V \cong [K \to Y \oplus C \to Z]$
where $C \to Z$ is \emph{any} $\C$-cover. In particular we can take $C=Z$ and  then it follows that $V=0$ since $Y\oplus Z \to Z$ is a split surjection.
\end{proof}
\begin{definition}\label{Yonedasimples} If $\Ac$ is a Krull-Schmidt category and $\AA$ has enough simples we denote by $\L_Z$
the unique simple quotient of $\P_Z$ in $\AC$, for $Z \in \C$.
\end{definition}
To sum up we have showed
\begin{cor}\label{simple objects cor} Assume that $\Ac$ is Krull-Schmidt and that $\AA$ has enough simples. Then $\AC$ has enough simples,
each projective in $\AC$ has a unique simple quotient $\L_Z$ and
for $Z \in \Ac$ indecomposable we have $\pi_\C (\L^\Ac_Z) = \L_Z$,
if $Z \in \C$, and $\pi_\C (\L^\Ac_Z) = 0$ else. (Recall that $\L^\Ac_Z$ denotes the simple quotient of $\P_Z$ in $\Ac$.) Since $\AC$ has
enough projectives any simple object  is isomorphic to some
$\L_M$.
\end{cor}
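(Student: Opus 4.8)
\emph{Proof proposal.} The plan is to read the corollary off from Proposition \ref{simple objects}, Proposition \ref{Projectives in AC} and Definition \ref{Yonedasimples}; nothing new needs to be proved, only the pieces need to be assembled, and the only point requiring a moment's care is the reduction from arbitrary projectives to indecomposable ones, which is where the Krull--Schmidt hypothesis enters.

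First I would record the structural facts. By Proposition \ref{Projectives in AC} the projectives of $\AC$ are exactly the objects $\P_Z$ with $Z \in \C$, and the indecomposable ones are the $\P_Z$ with $Z$ indecomposable; here one uses that $\C$, being a Karoubi closed full additive subcategory of the Krull--Schmidt category $\Ac$, is itself Krull--Schmidt (its objects decompose in $\Ac$ into indecomposables with local endomorphism rings, and those summands lie in $\C$ since $\C$ is Karoubi closed and full). Part iii) of Proposition \ref{simple objects} then gives that $\AC$ has enough simples, and part ii) gives that for $Z$ indecomposable $\P_Z$ has, up to isomorphism, a single simple quotient; this is precisely the object $\L_Z$ of Definition \ref{Yonedasimples}.

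Next I would compute the $\pi_\C$-images of the simples of $\AA$. Fix $Z \in \Ac$ indecomposable and write $\L^\Ac_Z = [X \to Y \to Z]$, the simple quotient of $\P_Z$ in $\AA$. If $Z \in \C$, then $\P_Z \in \TC$, so $\pi_\C \P_Z = \P_Z$, and since $\pi_\C : \AA \to \AC$ is exact (Proposition \ref{Section AA} iii)) the object $\pi_\C \L^\Ac_Z = [X \to Y \to Z]_\C$ is a quotient of $\P_Z$ in $\AC$, which is simple and non-zero by Proposition \ref{simple objects} i) and iii); by the uniqueness of the preceding paragraph it must be $\L_Z$. If $Z \notin \C$, then Proposition \ref{simple objects} iv) gives directly $[X \to Y \to Z]_\C = 0$, i.e. $\pi_\C \L^\Ac_Z = 0$.

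Finally, to see that every simple object $S$ of $\AC$ is isomorphic to some $\L_M$: since $\AC$ has enough projectives (Proposition \ref{Projectives in AC}), $S$ is a quotient of a projective, which by the first paragraph is a finite direct sum $\bigoplus_i \P_{M_i}$ with the $M_i \in \C$ indecomposable. The surjection onto $S$ restricted to at least one summand $\P_{M_i}$ is non-zero, hence surjective because $S$ is simple, so $S$ is a simple quotient of $\P_{M_i}$ and therefore $S \cong \L_{M_i}$. I do not anticipate a genuine obstacle; the subtlest point is simply keeping the Krull--Schmidt hypothesis (and the exactness of $\pi_\C$, already established) in play so that the cited results apply.
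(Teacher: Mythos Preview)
Your proposal is correct and matches the paper's approach exactly: the paper presents this corollary with the phrase ``To sum up we have showed'', i.e.\ as an immediate assembly of the parts of Proposition \ref{simple objects} together with Proposition \ref{Projectives in AC}, which is precisely what you have written out in detail. The extra care you take (that $\C$ inherits the Krull--Schmidt property, and the explicit reduction to an indecomposable summand for the final claim) is appropriate and fills in what the paper leaves implicit.
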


\subsection{Yoneda embedding}\label{Yoneda embedding}
Let $\Mod \C$ be the category of all additive functors $\C^{op}
\to \operatorname{Ab}$, where $\operatorname{Ab}$ is the category
of abelian groups. This is an abelian category where for $\phi$ a
morphism in $\Mod \C$, $(\Ker \phi)(X) = \Ker \phi_X$ and $(\Coker
\phi)(X) = \Coker \phi_X$. Let $\mod \C$ be the full subcategory
of coherent functors, i.e. those functors $F$ which admit (projective) presentations
$$\Hom_\C(\- , X) \to \Hom_\C(\- ,Y) \to F \to 0,$$ with $X,Y \in
\C$.  It is well known that if $\C$ has pseudo-kernels then $\mod
\C$ is abelian and moreover the embedding $\mod \C \to \Mod \C$ is
exact (see \cite{AU}). Our category $\C$ is $\Ac$-approximating
and therefore do have pseudo-kernels because if $f:A \to B$ is a
morphism in $\C$ and $K$ its kernel in $\Ac$ then the composition
$K_\C \to K \hookrightarrow A$ is a pseudo-kernel of $f$.

The functor $\C \ni X \mapsto \Hom(\- ,X) \in \Proj(\mod \C)$ defines an
equivalence $\C \cong \Proj(\mod\C)$. We have functors between
triangulated categories
\begin{equation}\label{ffembed1}
\TC \isoto \T_{\Proj(\mod\C)} \isoto \D(\mod\C) \to \D(\Mod\C).
\end{equation}
These functors are $t$-exact with respect to $\TlC$ and the
tautological $t$-structures on the derived categories. Thus,
taking hearts, we get an equivalence followed by an exact fully
faithful embedding
$$
\rho_\C: \AC \isoto \mod \C \to \Mod\C.
$$
Let $X \in \C$ be indecomposable. If we assume that $\Ac$ is a
Krull-Schmidt category, then $\Hom_\C(\- , X)$ has a (unique)
maximal subobject $\J(\- ,X) \in \Mod \C$, called the radical. For
$Y \in \C$ we have $\J(Y, X) = \{f: Y \to X\mid f \hbox{ is not a
split epimorphism.}\}$. $\J(\- ,X)$ may not belong to $\mod \C$;
for instance, if $\C$ is abelian then $\J(\- ,X) \in \mod \C$ iff
there is an almost split short exact sequence ending with $X$, see
\cite{ARS}.

\begin{proposition}\label{Yoneda embedding prop}
\textbf{i)} For any $V \in \AC$ we have $V$ is simple $\iff$
$\rho_\C V$ is simple.

\textbf{ii)} Let $X \in \C$ be indecomposable. Assume that $\Ac$
is Krull-Schmidt and that $\P_X$ admits the (necessarily unique)
simple quotient $\L_X$ in \AC. Then $\rho_\C \L_X \cong \Hom_\C(\-
, X)/\J(\- ,X)$. Consequently, if  $\L_X =[\ldots \to C^{-n}
\overset{d^{-n}}{\longrightarrow} \ldots \to \C^{-1}
\overset{d^{-1}}{\longrightarrow} X]$ then  $(d^{-1})_*
\Hom_{\C}(\- , C^{-1}) = \J(\- , X)$ and
$$(*) \ \ \ \ \ \ \  \ldots \to \Hom_\C(\- , C^{-n}) \overset{(d^{-n})_*}{\longrightarrow} \ldots
\to \Hom_{\C}(\- , C^{-1}) \overset{(d^{-1})_*}{\longrightarrow}
\J(\- , X) \to 0 \hbox{ is exact. }
$$
\textbf{iii)} Conversely, if $[\ldots \to C^{-n}
\overset{d^{-n}}{\longrightarrow} \ldots \to \C^{-1}
\overset{d^{-1}}{\longrightarrow} X] \in \AC$  satisfies $(*)$
then this object is simple and thus isomorphic to $\L_X$.
\end{proposition}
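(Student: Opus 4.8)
The plan is to transport the question through the Yoneda embedding $\rho_\C\colon \AC \isoto \mod \C \to \Mod \C$ of \eqref{ffembed1} and invoke part \textbf{i)}, which says that $V$ is simple in $\AC$ if and only if $\rho_\C V$ is simple. Thus everything reduces to computing $\rho_\C V$ and recognizing it as a simple functor.

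First I would compute $\rho_\C V$, where $V = [\ldots \to C^{-n} \to \ldots \to C^{-1}\overset{d^{-1}}{\to} C^0]$ with $C^0 = X$. By Proposition \ref{Projectives in AC}, the exact sequence \eqref{gl.dim0}, namely $\ldots \to \P_{C^{-1}} \to \P_{C^0} \to V \to 0$, is a projective resolution of $V$ in $\AC$. Since $\rho_\C$ is an exact equivalence which carries $\P_M$ to $\Hom_\C(\ ,M)$ and $\Proj(\AC)$ onto $\Proj(\mod \C)$, applying $\rho_\C$ produces a projective resolution $\ldots \to \Hom_\C(\ ,C^{-1})\overset{(d^{-1})_*}{\to} \Hom_\C(\ ,X) \to \rho_\C V \to 0$ in $\mod \C$; hence $\rho_\C V \cong \Coker\bigl((d^{-1})_*\colon \Hom_\C(\ ,C^{-1})\to \Hom_\C(\ ,X)\bigr)$ --- the same computation already used in the proof of \textbf{ii)}. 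By hypothesis $(*)$ the image of $(d^{-1})_*$ is exactly $\J(\ ,X)$, so $\rho_\C V \cong \Hom_\C(\ ,X)/\J(\ ,X)$.

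Next, since $X$ is indecomposable and $\Ac$ is Krull-Schmidt, $\J(\ ,X)$ is the unique maximal subobject of the indecomposable projective $\Hom_\C(\ ,X)\in \mod \C$ (as recalled just before the Proposition), so $\Hom_\C(\ ,X)/\J(\ ,X)$ is simple in $\Mod \C$, and, being finitely presented, also in $\mod \C$. By part \textbf{i)} it follows that $V$ is simple in $\AC$. It remains to identify $V$ with $\L_X$: from \eqref{gl.dim0} the canonical map $\P_X = \P_{C^0}\tto V$ is an epimorphism --- equivalently, by Lemma \ref{Section AA} \textbf{ii)}, because $X\oplus C^{-1}\to X$, $(x,c)\mapsto x+d^{-1}(c)$, is a split epimorphism --- so $V$ is a simple quotient of $\P_X$, and by Proposition \ref{simple objects} \textbf{ii)} any two simple quotients of $\P_X$ are isomorphic; hence $V\cong \L_X$.

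I do not expect a genuine obstacle: the argument is a short deduction from part \textbf{i)}. The only points requiring care lie in the second paragraph --- making sure the presentation of $\rho_\C V$ produced by $\rho_\C$ is literally the two-term complex $[\Hom_\C(\ ,C^{-1})\to \Hom_\C(\ ,X)]$ coming from $V$, so that condition $(*)$ applies verbatim, and recalling that $\Hom_\C(\ ,X)/\rad\Hom_\C(\ ,X)$ is simple in $\Mod \C$ for $X$ indecomposable. Both are immediate from the construction of $\rho_\C$ in \eqref{ffembed1} and standard functor-category theory.
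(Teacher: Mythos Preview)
Your argument for part \textbf{iii)} is correct and follows essentially the same route as the paper: compute $\rho_\C V$ from the projective resolution \eqref{gl.dim0}, use $(*)$ to identify it with $\Hom_\C(\ ,X)/\J(\ ,X)$, invoke part \textbf{i)} to conclude $V$ is simple, and then recognize $V$ as the (unique) simple quotient $\L_X$ of $\P_X$. The paper's proof is a one-line compression of exactly this; your version simply supplies the details (and fixes a typo in the paper, where $\rho_\C(\L_X)$ should read $\rho_\C(V)$).
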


\begin{proof}
\textbf{i)} Assume that $V$ is simple. Let $\phi: M \to \rho_\C V$
be a non-surjective map with $M \in \Mod \C$. We must show $\phi =
0$. Since $M$ is a quotient of some direct sum $\oplus_{i \in I}
\Hom_\C(\- , X_i)$, $X_i \in \C$, we may assume $M =  \oplus_{i
\in I} \Hom_\C(\-  , X_i)$. We have $M = \underrightarrow{\lim} \,
M_j$, where $M_j = \oplus_{i \in I_j}  \Hom_\C(\-  , X_{i}) =
\Hom_\C(\- , \oplus_{i \in I_j} X_{i})\in \mod \C$, for some
finite subset  $I_j \subset I$. But then  $\phi|_{M_j} : M_j \to
\rho_\C V$ is given by a non-surjective map $\oplus_{i \in I_j}
X_{i} \to V$ by the Yoneda lemma. The latter map is zero by the
simplicity of $V$. Thus $\phi|_{M_j} = 0$ and we conclude $\Jm
\phi \cong \Jm \underrightarrow{\lim} \, \phi|_{M_j} = 0$.

Conversely, if $\rho_\C V$ is simple then $V$ must be simple since
$\rho_\C$ is exact and fully faithful.

\smallskip

\noindent \textbf{ii)} By \textbf{i)} we have that $\rho_\C(\L_X)$
is a simple quotient of $\rho_\C(\P_X) = \Hom_\C(\- , X)$; hence
$\rho_\C(\L_X) = \Hom_\C(\-  , X)/\J(\-  ,X)$. This proves
exactness of $(*)$ in degree $0$. Exactness in all other degrees
follows from the $t$-exactness of the functors in
\eqref{ffembed1}.

\smallskip

\noindent \textbf{iii)} If $(*)$ holds we get $\rho_\C(\L_X) =
\Hom(\- , X)/\mathcal{J}(\- , X)$ is simple and thus isomorphic to
$\L_X$.
\end{proof}

\section{The structure of $\AC$ when $\C \subseteq \mod(R)$}\label{Duality Big Section}
In this section $\Ac = \mod(R)$ and $\C$ is an $\Ac$-approximating
full additive subcategory of $\Ac$. We deduce the existence of a
Serre functor $\SC$. Under the additional hypothesis that $\C$ is
functorially finite we show that $\AC$ has enough injectives and
deduce that $\AC$ is a quotient category of $\AA$.
\subsection{Serre duality and injectives}\label{Duality}  Recall from Section \ref{roosterlabel} that there is a
Serre functor $\SA: \Proj(\AA) \to \Inj(\AA)$ on $\AA$. This means
that there is a functorial isomorphism
\begin{equation}\label{serreextended}
\Dd \Hom_{\AA}(\P_M, \- )\cong \Hom_{\AA}(\- , \SA \P_M), \hbox{
for } M \in \Ac.
\end{equation}
Recall that $\Proj(\AC) \subseteq \Proj(\AA)$. We define a Serre
functor $\SC$ on $\AC$ by $\SC \df \pi_\C \circ \SA
|_{\Proj(\AC)}: \Proj(\AC) \to \AC$.
\begin{proposition}\label{Duality prop}   \textbf{i)}  We have $\Dd
\Hom_{\AC}(\P_M, \- ) \cong \Hom_{\AC}(  \- , \Ss_\C \P_M)$, for $M \in
\C$, and $\SC$ takes values in $\Inj(\AC)$.

\textbf{ii)} $\SC$ is fully faithful.

\textbf{iii)}  For $M \in \AC$ indecomposable $\SC \P_M$ is an
injective hull of $\L_M$.

\textbf{iv)} $\Ss_\C$ extends to a triangulated functor $\Ss_\C:
\TC \to \TC$.
\end{proposition}
\begin{proof} \textbf{i)} For $M \in \C$ and $V \in \AC$ we have
$$
\Dd \Hom_{\AC}(\P_M, V) =  \Dd \Hom_{\TA}(\P_M, V) \cong
$$
$$
\Hom_{\TA}(V, \Ss_\Ac \P_M) \cong \Hom_{\TC}(V, \Ss_\C \P_M) =
\Hom_{\AC}(V, \Ss_\C \P_M).
$$
Here, the second isomorphism holds by Lemma \ref{roosterlabel lem}
and the third isomorphism holds because $\pi_\C$ is right adjoint
to the inclusion $\TC \hookrightarrow \TA$. Since $\P_M$ is
projective it follows that $\Hom_{\AC}(\- , \Ss_\C \P_M)$ is exact
so that $\SC \P_M$ is injective.
\medskip

\noindent \textbf{ii)}   For $M, M' \in \C$ we have isomorphisms
$$
\Hom_{\AC}(\P_M, \P_{M'}) \isoto \Dd \Hom_{\AC}(\P_{M'}, \Ss_\C
\P_{M}) \isoto \Hom_{\AC}(\Ss_\C \P_{M}, \Ss_\C \P_{M'})
$$
which proves that $\SC$ is fully faithful.

\medskip

\noindent \textbf{iii)}  Since $\SC$ is fully faithful and $\P_M$
is indecomposable we conclude that $\SC \P_M$ is indecomposable.
Moreover, since $\Hom_{\AC}(\L_M, \SC \P_M) \cong \Dd
\Hom_{\AC}(\P_M, \L_M) \neq 0$ we conclude that $\L_M$ embeds to
$\SC \P_M$.

\medskip

\noindent \textbf{iv)} The extension is given as follows (compare
with \cite{BJ}, Proposition 4.6)
$$
\TC \cong \T_{\Proj(\AC)} \overset{\Ss_\C}{\longrightarrow}
\T_{\Inj(\AC)} \to \T_{\AC} \to \D(\AC) \cong \T_{\Proj(\AC)}
\cong \TC.
$$
\end{proof}
Let $C \in \C$ be indecomposable. Then $\End_{\AC}(\P_C) =
\End_\C(C)$ is a local artinian ring. We let $\m$ be its maximal
ideal. Now the simple quotient $\L_C$ of $\P_C$ can be
characterized as follows
\begin{cor}\label{Duality cor} Let $\tau \in \Hom_{\AC}(\P_C, \Ss \P_C)$ be non-zero such that $\tau \circ \m = 0$. Then $\L_C \cong \Jm \tau$.
(The uniqueness of $\L_C$ implies that such a $\tau$ is unique up
to a scalar.)
\end{cor}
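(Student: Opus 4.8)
The plan is to exploit Serre duality from Proposition \ref{Duality} i) together with the characterization of the simple quotient as the image of a ``minimal'' morphism, and the fact that $\Ss_\C$ is fully faithful. First I would use the duality isomorphism $\Dd\Hom_{\AC}(\P_C,\ )\cong\Hom_{\AC}(\ ,\Ss_\C\P_C)$ evaluated at $\P_C$ itself: this gives $\Hom_{\AC}(\P_C,\Ss_\C\P_C)\cong\Dd\End_\C(C)$ as a bimodule over the local artinian ring $\Gamma\df\End_\C(C)$. Since $\Gamma$ is local with maximal ideal $\m$ and residue field $\Gamma/\m$, the right-annihilator-of-$\m$ socle of the bimodule $\Dd\Gamma$ is one-dimensional over $\Gamma/\m$ (this is the standard fact that $\Dd\Gamma$, as a right $\Gamma$-module, has simple socle $\Dd(\Gamma/\m)$); hence a non-zero $\tau$ with $\tau\circ\m=0$ exists and is unique up to a scalar, which justifies the parenthetical remark.

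Next I would identify $\Jm\tau$. The key point is that, under the equivalence $\P:\C\isoto\Proj(\AC)$ and the fully faithfulness of $\Ss_\C$, the condition $\tau\circ\m=0$ says precisely that $\tau$ factors through every proper quotient of $\P_C$ and dually factors through the injective envelope starting at the socle of $\Ss_\C\P_C$; concretely, $\tau\colon\P_C\to\Ss_\C\P_C$ kills $\rad\P_C$ (whose projective cover is $\bigoplus\P_{C'}\to\P_C$ running over maps in $\m$, by the exact sequence \ref{gl.dim0}), so $\tau$ factors as $\P_C\tto\P_C/\rad\P_C=\L_C\to\Ss_\C\P_C$, the last map being the injective hull embedding of $\L_C$ into $\Ss_\C\P_C$ from Proposition \ref{Duality} ii). Therefore $\Jm\tau$ is the image of $\L_C$ under a monomorphism, i.e.\ $\Jm\tau\cong\L_C$. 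Conversely any non-zero $\tau$ that factors through $\L_C$ automatically satisfies $\tau\circ\m=0$ since $\m$ acts as zero on $\L_C$; this pins down $\tau$ up to scalar and gives $\L_C\cong\Jm\tau$.

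I expect the main obstacle to be the clean verification that ``$\tau\circ\m=0$'' forces $\tau$ to factor through the simple quotient $\L_C$ rather than through some other (non-simple) proper quotient of $\P_C$. This is where one needs that $\P_C$ has a \emph{unique} maximal subobject with simple quotient $\L_C$ — available here since $\mod(R)$ is Krull--Schmidt and $\AA$ (hence $\AC$) has enough simples, by Corollary \ref{simple objects} and Proposition \ref{simple objects} ii) — so that $\rad\P_C$ is the unique maximal subobject and any morphism out of $\P_C$ annihilating $\m$ (equivalently, annihilating $\rad\P_C$, using the projective presentation \ref{gl.dim0} of $\L_C$) factors through $\L_C$. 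Once that is in place, combining with the injectivity of $\Ss_\C\P_C$ and $\L_C\into\Ss_\C\P_C$ from part ii) of the Proposition makes the identification $\Jm\tau\cong\L_C$ immediate. The dual uniqueness statement then follows from the one-dimensionality of the relevant socle noted above.
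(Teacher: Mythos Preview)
Your overall strategy---factor $\tau$ through the simple top $\L_C$ and then identify $\Jm\tau$ with the image of $\L_C$ inside its injective hull $\Ss_\C\P_C$---is a reasonable alternative to the paper's route, but there is a genuine gap at exactly the point you flag as the ``main obstacle''. You assert that $\tau\circ\m=0$ is \emph{equivalent} to $\tau$ annihilating $\rad\P_C$, and justify this by saying $\rad\P_C$ has projective cover $\bigoplus\P_{C'}\to\P_C$ ``running over maps in $\m$''. But $\m$ is the maximal ideal of $\End_\C(C)$, so it only contains \emph{endomorphisms} of $C$; the radical $\rad\P_C$ is in general generated by images of non-split-epi maps $g:\P_{C'}\to\P_C$ with $C'\not\cong C$ as well (this is what \ref{gl.dim0} actually gives for $\L_C$). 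Nothing you wrote explains why $\tau\circ g=0$ for such $g$.

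The gap is fillable, but it takes an argument you did not give: by naturality of the Serre duality isomorphism $\Hom_{\AC}(\,-\,,\Ss_\C\P_C)\cong\Dd\Hom_{\AC}(\P_C,\,-\,)$, the element $\tau\circ g\in\Hom_{\AC}(\P_{C'},\Ss_\C\P_C)$ corresponds to the functional $\phi\mapsto\langle\tau,g\circ\phi\rangle$ on $\Hom_\C(C,C')$, and for any such $\phi$ the composite $g\circ\phi\in\End_\C(C)$ lies in $\m$ (it factors through a non-split-epi, hence is not an isomorphism). Thus $\tau\circ g=0$, and your factorization through $\L_C$ goes through. The paper avoids this detour entirely: it shows directly that $\Jm\tau$ is simple by taking a proper subobject $L'\subsetneq\Jm\tau$, lifting any $f:\P_C\to L'$ through the surjection $\tau:\P_C\twoheadrightarrow\Jm\tau$ via projectivity to get $i\circ f=\tau\circ h$, and observing that $h\in\m$ (since $i\circ f$ is not surjective), whence $f=0$; then $\Hom_{\AC}(\P_C,L')=0$ forces $\Hom_{\AC}(L',\Ss_\C\P_C)=0$ by Serre duality, so $L'=0$. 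This is both shorter and uses only the hypothesis $\tau\circ\m=0$ together with projectivity, without needing to identify $\rad\P_C$ beforehand.
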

\begin{proof}
Since $\Hom_{\AC}(\P_C ,\Ss \P_C) \cong \Dd \End_{\AC}(\P_C) \neq
0$ we can find $0 \neq \tau \in \Hom_{\AC}(\P_C ,\Ss \P_C)$ with
$\tau \circ \m = 0$.

We show $\Jm \tau$ is simple. Let $i: L' \hookrightarrow \Jm \tau$
be an monomorphism which is not surjective. We must show $L' = 0$.
This follows if we can show $\Hom_{\AC}(L',\Ss \P_C) = 0$ which in
turn  amounts to show that $\Hom_{\AC}(\P_C,L') = 0$.

Let $f \in \Hom_{\AC}(\P_C,L')$. Since $\P_C$ is projective $i
\circ f = \tau \circ h$ for some $h \in \End_{\AC}(\P_C)$. Since
$i \circ f$ is not surjective we get that $h$ is a non-unit. Thus
$\tau \circ h = 0$. Thus $f = 0$.
\end{proof}

\subsection{{}}\label{injectivesec}
\begin{proposition}\label{injectivesec prop} Assume that $\C$ is functorially finite. Then $\AC$ has enough injectives.\footnote{Proposition \ref{injectivesec prop}  is not optimal in the sense that it frequently happens that $\AC$ has enough injectives even when
$\C$  is not functorially finite. E.g. if $\C = \Proj(\mod(R))$,
then $\AC \cong \mod(R)$ has enough
injectives.}\label{gallo} They are of the form $\Ss_\C P_{C}$ for
$C \in \C$. In particular, each injective in $\AC$ is the
$\C$-approximation of some injective in $\AA$.
\end{proposition}
\begin{proof}  Let $M \in \AC$. We shall construct an embedding of $M$ into an injective object of $\AC$. Let $\Ss_\Ac \P_X$ be an
injective hull of $\taug_\Ac M$ in $\AA$. Let $\Dd C
\twoheadrightarrow \Dd X$ be a $\Dd \C$-cover of $\Dd X$ in $\Dd
\Ac$ and let $X \hookrightarrow C$ be the dual map (i.e. a
``$\C$-hull"). By Proposition \ref{Duality prop} $\Ss_\C \P_{C}
\in \Inj(\AC)$. We have
$$M = (\taug_\Ac M)_\C \hookrightarrow \Ss_\C \P_X \to  \Ss_\C
P_{C}.$$ Thus it remains to be shown that $\Ss_\C \P_X \to  \Ss_\C
\P_{C}$ is injective. This follows if we can show that $\Hom_{\TC}(V, \Ss_\C \P_X) \to \Hom_{\TC}(V, \Ss_\C \P_{C})$ is injective, for
$V \in \AC$. By adjointness this amounts to show that
\begin{equation}\label{firstcheck}
\Hom_{\TA}(V, \Ss_\Ac \P_X) \to \Hom_{\TA}(V, \Ss_\Ac \P_{C})
\hbox{ is injective for }  V \in \AC.
\end{equation}
Note that $\Dd V \in \operatorname{K}^+(\Dd \C)$, where $\Dd V \df
\Hom^\bullet_k(V,E)$ is the dual complex, and that  the natural
map
$$\Hom_{\operatorname{K}^+(\Dd \Ac)}(\Dd V, \P_{\Dd C}) \to \Hom_{\operatorname{K}^+(\Dd \Ac)}(\Dd V, \P_{\Dd X})$$
is surjective since $\Dd C \to \Dd X$ is a $\C$-cover. Thus
\eqref{firstcheck} follows since we now have
$$\Hom_{\TA}(V, \Ss_\Ac \P_X) = \Dd \Hom_{\TA}(\P_X, V) = \Dd \Hom_{\operatorname{K}^+(\Dd \Ac)}(\Dd V, \P_{\Dd X}) \hookrightarrow$$
$$
\Dd \Hom_{\operatorname{K}^+(\Dd \Ac)}(\Dd V, \P_{\Dd C}) =  \Dd
\Hom_{\TA}(\P_{C}, V) = \Hom_{\TA}(V, \Ss_\Ac \P_{C}).
$$
\end{proof}

\begin{cor}\label{injectivesec cor} Assume that $\C$ is functorially finite. Then   $\SC: \Proj(\AC) \to \Inj(\AC)$ is an equivalence of categories.
Moreover, if we assume in addition that $\Ac$ has finite
$\C$-dimension then the extension $\Ss_\C: \TC\to \TC$ restricts
to an auto-equivalence $\Ss_\C: \T^b_{\C} \to \T^b_{\C}$ of
triangulated categories.
\end{cor}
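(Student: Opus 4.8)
The plan is to deduce the first assertion directly from the two preceding propositions, and to obtain the second by transporting the equivalence $\SC\colon\Proj(\AC)\isoto\Inj(\AC)$ up to homotopy categories of bounded complexes, using the finite cohomological dimension of $\AC$ to move freely between $\Kb(\Proj(\AC))$, $\Db(\AC)$ and $\Kb(\Inj(\AC))$.

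For the first statement I would argue as follows. By Proposition \ref{Duality} the functor $\SC\colon\Proj(\AC)\to\Inj(\AC)$ is fully faithful, so it remains only to see that it is essentially surjective. By Proposition \ref{injectivesec} -- and this is precisely where functorial finiteness is needed -- every injective object of $\AC$ is of the form $\SC\P_C$ for some $C\in\C$; and by Proposition \ref{Projectives in AC} the functor $\P\colon\C\to\Proj(\AC)$ is an equivalence, so every object of $\Proj(\AC)$ is isomorphic to some $\P_C$. Hence $\SC$ is essentially surjective, and therefore an equivalence.

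For the second statement, assume in addition that $\Ac$ has finite $\C$-dimension, so that by Corollary \ref{Projectives in AC} the abelian category $\AC$ has finite cohomological dimension. Then $\AC$ has enough projectives (Proposition \ref{Projectives in AC}) and enough injectives (Proposition \ref{injectivesec}), and the finiteness of its global dimension gives the standard triangle equivalences $\Kb(\Proj(\AC))\isoto\Db(\AC)\isoto\Kb(\Inj(\AC))$. Applied termwise, the equivalence $\SC\colon\Proj(\AC)\isoto\Inj(\AC)$ of the first part induces a triangle equivalence $\Kb(\Proj(\AC))\isoto\Kb(\Inj(\AC))$, and $\P$ induces $\T^b_\C=\Kb(\C)\isoto\Kb(\Proj(\AC))$, compatibly with the identification $\TC\cong\D(\AC)$ of Proposition \ref{Projectives in AC}. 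Splicing these together yields an auto-equivalence
$$
\T^b_\C\;\cong\;\Kb(\Proj(\AC))\;\overset{\SC}{\isoto}\;\Kb(\Inj(\AC))\;\cong\;\Db(\AC)\;\cong\;\Kb(\Proj(\AC))\;\cong\;\T^b_\C ,
$$
and one then checks that this composite coincides with the restriction to $\T^b_\C$ of the extension $\SC\colon\TC\to\TC$ from Proposition \ref{Duality} iii), since that extension was defined by exactly the analogous composite on bounded-above complexes.

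The main point that will require care is showing that the extension $\SC\colon\TC\to\TC$ genuinely preserves $\T^b_\C$, in both directions. In the forward direction: for $V\in\T^b_\C\cong\Kb(\Proj(\AC))$, applying $\SC$ termwise produces a \emph{bounded} complex of injective objects of $\AC$, hence a cohomologically bounded object of $\D(\AC)$, which by finiteness of the global dimension admits a bounded projective resolution; so $\SC V\in\T^b_\C$. The same argument applied to a quasi-inverse of $\SC$ (which exists by the first part, transported through the equivalences above) shows the restriction is essentially surjective onto $\T^b_\C$, while its full faithfulness is inherited from that of the composite equivalence. Everything else is a formal manipulation of equivalences already established, and the hypothesis of finite $\C$-dimension enters only through this boundedness bookkeeping.
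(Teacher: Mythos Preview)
Your proof is correct and follows essentially the same route as the paper: the first assertion comes from combining fully faithfulness (Proposition \ref{Duality}) with essential surjectivity (Proposition \ref{injectivesec}), and the second from the chain $\T^b_\C \cong \Kb(\Proj(\AC)) \overset{\SC}{\isoto} \Kb(\Inj(\AC)) \cong \Db(\AC) \cong \T^b_\C$, with finite $\C$-dimension invoked via Corollary \ref{Projectives in AC} to justify the last equivalences. Your additional remarks on boundedness and on matching the composite with the extension of Proposition \ref{Duality} iii) are more explicit than what the paper writes, but the argument is the same.
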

\begin{proof} That  $\Ss_\C: \Proj(\AC) \to \Inj(\AC)$ is an equivalence follows from Proposition \ref{Duality prop} and Proposition \ref{injectivesec prop}.

The assumption that $\Ac$ has finite $\C$-dimension is equivalent
to assume that $\AC$ has finite cohomological dimension by
Corollary \ref{Projectives in AC cor}. Thus $\Ss_\C: \T^b_{\C} \to
\T^b_{\C}$ is given by the composition
$$
\T^b_{\C} \cong \T^b_{\Proj(\AC)} \overset{\Ss_\C}{\isoto}
\T^b_{\Inj(\AC)} \cong \Db(\AC) \cong \T^b_\C.
$$
\end{proof}
\begin{theorem}\label{injectivesec thm}  Assume that $\C$ is functorially finite. Then $\pi_\C : \AA \to \AC$ is a quotient functor, i.e. $\pi_\C$ has a right adjoint $\sigma$ such that the adjunction morphism $\pi_\C \circ \sigma \to Id_{\AC}$ is an isomorphism.
\end{theorem}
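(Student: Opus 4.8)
The plan is to construct the right adjoint $\sigma\colon\AC\to\AA$ explicitly out of the two Serre functors and the description of injectives obtained in Proposition~\ref{injectivesec}. Recall that functorial finiteness of $\C$ is used precisely to guarantee that $\AC$ has enough injectives and that these are exactly the objects $\SC\P_C$, $C\in\C$, and (by the Corollary following Proposition~\ref{injectivesec}, together with Proposition~\ref{Projectives in AC}) that $\SA\colon\Proj(\AA)\isoto\Inj(\AA)$ and $\SC\colon\Proj(\AC)\isoto\Inj(\AC)$ are equivalences; note also $\Proj(\AC)=\P(\C)\subseteq\P(\mod(R))=\Proj(\AA)$.

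First I would define $\sigma$ on injectives. Set
\[
\sigma_0\df\SA\circ\SC^{-1}\colon\Inj(\AC)\longrightarrow\Inj(\AA)\subseteq\AA,\qquad\sigma_0(\SC\P_C)=\SA\P_C .
\]
Since $\SC=\pi_\C\circ\SA|_{\Proj(\AC)}$ by definition, this immediately gives $\pi_\C\circ\sigma_0\cong\Id_{\Inj(\AC)}$. The key point is the natural isomorphism $\Hom_\AA(W,\sigma_0 I)\cong\Hom_\AC(\pi_\C W,I)$ for $W\in\AA$ and $I\in\Inj(\AC)$; writing $I=\SC\P_C$ it is the composite
\[
\Hom_\AC(\pi_\C W,\SC\P_C)\cong\Dd\Hom_\AC(\P_C,\pi_\C W)\cong\Dd\Hom_\AA(\P_C,W)\cong\Hom_\AA(W,\SA\P_C),
\]
using Serre duality in $\AC$ (Proposition~\ref{Duality}), then the adjunction $\taug_{\Ac}\dashv\pi_\C$ of Proposition~\ref{Section AA} together with the trivial identity $\taug_{\Ac}\P_C=\P_C$, and finally Serre duality in $\AA$ (the duality property $(7)$ in~\ref{roosterlabel}).

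Next I would extend $\sigma$ to all of $\AC$. Given $V\in\AC$, use that $\AC$ has enough injectives to pick an injective copresentation $0\to V\to I^0\overset{f}{\to}I^1$ with $I^0,I^1\in\Inj(\AC)$, and put $\sigma V\df\Ker\bigl(\sigma_0 f\colon\sigma_0 I^0\to\sigma_0 I^1\bigr)$, the kernel taken in $\AA$. Left exactness of $\Hom_\AA(W,-)$ and the isomorphism of the previous step then give, naturally in $W$,
\[
\Hom_\AA(W,\sigma V)\cong\Ker\bigl(\Hom_\AC(\pi_\C W,I^0)\to\Hom_\AC(\pi_\C W,I^1)\bigr)=\Hom_\AC(\pi_\C W,V).
\]
Thus $\Hom_\AC(\pi_\C(-),V)$ is representable; by the standard parametrized-representability argument the representing objects assemble into a functor $\sigma$, automatically right adjoint to $\pi_\C$ and independent of the chosen copresentation up to canonical isomorphism. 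For the adjunction counit: $\pi_\C$ is exact (Proposition~\ref{Section AA}) and $\pi_\C\circ\sigma_0\cong\Id$, so $\pi_\C\sigma V=\pi_\C\Ker(\sigma_0 f)\cong\Ker(\pi_\C\sigma_0 f)\cong\Ker f=V$, and one checks this identification is the counit. (As a by-product, $\AC$ is the Serre quotient of $\AA$ by $\Ker\pi_\C$, matching the slogan of the introduction.)

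The hard part will be the naturality bookkeeping in the first step: one must thread the two instances of Serre duality and the truncation adjunction together while tracking naturality in both $W$ and $I$ (equivalently in $C$). Beyond that the argument is formal, the two substantive inputs being ``$\AC$ has enough injectives'' (Proposition~\ref{injectivesec}, where functorial finiteness enters) and exactness of $\pi_\C$.
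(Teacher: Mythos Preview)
Your proof is correct and shares its core with the paper's: both hinge on the identity $\sigma(\SC\P_C)\cong\SA\P_C$, established via the same chain of Serre dualities and the adjunction $\taugA\dashv\pi_\C$, and both then extend $\sigma$ off injectives by taking kernels along an injective copresentation. The difference is in packaging. The paper first passes to ind-objects, invokes the adjoint functor theorem for Grothendieck categories to produce $\sigma\colon\Ind(\AC)\to\Ind(\AA)$ abstractly, and only afterwards checks that $\sigma$ lands in $\AA$ by computing $\sigma\SC\P_C$ and using left exactness. You bypass ind-objects entirely and build $\sigma$ by hand: define it on $\Inj(\AC)$ as $\SA\circ\SC^{-1}$, verify the adjunction identity there, then use representability to extend. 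Your route is more elementary and self-contained (no appeal to \cite{KS}), at the cost of the naturality bookkeeping you flag; the paper's route gets functoriality of $\sigma$ for free from the adjoint functor theorem but imports more machinery. Either way the substantive inputs are identical: Proposition~\ref{injectivesec} for enough injectives and the shape of $\Inj(\AC)$, Proposition~\ref{Duality} for the Serre isomorphisms, and exactness of $\pi_\C$ for the counit.
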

\begin{proof}
Let $\Ind(\AA)$ be the abelian category of ind-objects in $\AA$
(see \cite{KS}). Then $\pi_\C$ extends to an exact functor
$\pi_\C: \Ind(\AA) \to \Ind(\AC)$ which commutes with direct sums.
Since $\Ind(\AA)$ is a Grothendieck category $\pi_\C$ has a right
adjoint $\sigma: \Ind(\AC) \to \Ind(\AA)$.

\smallskip

We show that $\sigma$ restricts to an adjoint of $\pi_\C: \AA \to
\AC$, i.e. that $\sigma V \in \AA$ for $V \in \AC$. Pick an
injective corepresentation  $0 \to V \to \SC \P_{C^0}
\overset{f}{\longrightarrow} \SC \P_{C^1}$. Since $\sigma$ is left
exact (being a right adjoint) we have $\sigma V \cong \Ker
\sigma(f)$. On the other hand, to show that $\Ker \sigma(f) \in
\AA$ it is enough to show that $\sigma \Ss_\C \P_{C} \in \AA$ for
$C = C^{0}, C^1$. This holds since we will actually show
\begin{equation}\label{sSeq}
\sigma \Ss_\C \P_C \cong \Ss_\Ac \P_C.
\end{equation}
Since $\AA$ is dense in $\Ind(\AA)$, in order to establish
\eqref{sSeq}, it suffices to note that for $W \in \AA$ we have
$$
\Hom_{\Ind(\AA)}(W, \sigma \Ss_\C \P_C) \cong \Hom_{\AC}(\pi_\C W,
\Ss_\C \P_C) \cong \Dd \Hom_{\AC}(\P_C, \pi_\C W)  \cong
$$
$$
\Dd  \Hom_{\AA}(\P_C, W)  \cong  \Hom_{\AA}(W, \Ss_\Ac \P_C).
$$
We show that $\pi_\C \circ \sigma \to Id_{\AC}$ is an isomorphism.
Since $\pi_\C \circ \sigma$ is left exact and $\AC$ has enough
injectives it is enough to verify that $\pi_\C \circ \sigma
(\Ss_\C \P_{C}) \to \Ss_\C \P_{C}$ is an isomorphism. This again
holds since $\sigma \Ss_\C \P_{C}  \cong \Ss_\Ac \P_C$.
\end{proof}
\section{The structure of $\AC$ when $\C$ is a maximal $n$-orthogonal subcategory}\label{Oyama's own section}
The material in this section is a condensed version of the master
thesis  \cite{A} of the first author. We keep the assumptions on
$\Ac = \mod(R)$ and $\C$ from Section \ref{Duality Big Section}.
Following \cite{I} we study the case where $\C$ is a maximal
$n$-orthogonal subcategory of $\Ac$. We prove that simples of
$\AC$ are generalized AR sequences and reprove the main results of
\cite{I} with new methods. A crucial point is that injectives in
$\AC$ take a very specific form (Proposition \ref{again my sweet
injectives prop}) which allows us to establish the AR duality
(Theorem \ref{really really AR thm}).

\subsection{{}}\label{def n ort sec}
Let $n \geq 0$. Following \cite{I} we call $\C$ a maximal $n$-orthogonal subcategory if it is functorially finite,
$\Proj(\Ac), \Inj(\Ac) \subseteq \C$ and for an object $X \in \Ac$ we have
$$X \in \C \iff  \Ext^i(\C, X) = \Ext^i(X,\C) = 0, \, \forall \, 1 \leq i <n.$$
(For various examples of such subcategories see \cite{I}.) We
assume throughout Section \ref{Oyama's own section} that $\C$ is
maximal $n$-orthogonal.

\begin{lem}\label{def n ort sec lem} [see \cite{I}] \textbf{i)} Let
$0 \to X^{-n-2}\overset{d^{-n-2}}{\longrightarrow} \ldots \to
X^{-1} \overset{d^{-1}}{\longrightarrow} X^0 \to 0$ be an exact
sequence with terms in $\C$. Then the following conditions are
equivalent $\operatorname{(1)}$ $d^{-1}$ is split $\iff$
$\operatorname{(2)}$ $d^{-n-2}$ is split $\iff$
$\operatorname{(3)}$  the sequence is homotopic to $0$.

\textbf{ii)} Any $X \in \Ac$ has $\C$-dimension $\leq n$.
\end{lem}
\begin{proof}
$\textbf{i)}$. $\operatorname{(1)} \implies \operatorname{(2)}$.
Assume that $\operatorname{(2)}$ is false and let $1 < m < n+1$ be
the smallest integer such that $X^{-m} \to \Jm d^{-m}$ doesn't
split. Thus we have the exact sequence $0 \to X^{-n-2} \to \ldots
\to X^{-m} \to \Jm d^{-m} \to 0$ with terms in $\C$ (since $\Jm
d^{-m}$ is a direct summand of $X^{-m+1}$ it belongs to $\C$). A
simple d\'evissage shows that $ \Ext^{n+1-m}_{\Ac}(\Jm d^{-m},
X^{-n-2}) \cong \Ext^1_\Ac(\Jm d^{-m}, \Jm d^{{-m-1}}) \neq 0$.
This contradicts that $\Jm d^{-m}, X^{-n-2} \in \C$. The other
implications are proved the same way.

\medskip

\noindent \textbf{ii)} Let $C^{-n-1}
\overset{d^{-n-1}}{\longrightarrow} \ldots \to C^{-1}
\overset{d^{-1}}{\longrightarrow} C^0
\overset{d^0}{\twoheadrightarrow} X$ be the beginning of a
$\C$-resolution of $X$. Let $K^{i}  = \Ker d^i$, $K^1 = X$ and $Y
\in \C$. We must prove $K^{-n-1} \in \C$. Applying $R\Hom_\Ac(\- ,
Y)$ to the short exact sequences $K^i \into C^i \tto K^{i+1}$ we
get by induction that $\Ext^j_\Ac(K^{-n-1}, Y) = 0$ for all $1\leq
j \leq n$. Thus $K^{-n+1} \in \C$.
\end{proof}
\begin{proposition}\label{def n ort prop} The set of objects in $\AC$ are precisely
$ \Lambda := \{V \in \TC\mid  V \cong [C^{-n-2} \to \ldots \to
C^1 \to C^0],\, C^i \in \C, \, H^i(V) = 0 \hbox{ for } i < 0\}. $
\end{proposition}
\begin{proof} Let $V \in \AC$. By Corollary \ref{Projectives in
AC cor} and ii) of the previous lemma we obtain that  $\AC$ has
cohomological dimension $\leq n+2$. By \eqref{gl.dim0} this
implies that $V \cong [C^{-n-2} \to \ldots \to C^{-1} \to C^0] \in
\Lambda$. We also note for the record that it follows that $V$
admits the projective resolution
\begin{equation}\label{gl.dim}
0 \to \P_{C^{-n-2}} \to \ldots \to \P_{C^{-1}} \to \P_{C^0} \to V
\to 0.
\end{equation}

\noindent Conversely, assume that $V = [C^{-n-2}
\overset{d^{-n-2}}{\longrightarrow} \ldots \to C^{-1}
\overset{d^{-1}}{\longrightarrow} C^0] \in \Lambda$. Then
evidently $V \in \TlC$. To prove that $V \in \TgC$ we must show
that $\Hom_{\TC}(\T^{\leq -1}_\C,V) = 0$. For this it suffices to
show that $\Hom_{\TC}(X[i],V) = 0$, for $X \in \C$ and $i>0$. This
in turn holds because
$$
\Hom_{\TC}(X[i],V) = \Hom_\Ac(X, \Jm d^{-i-1})/(d^{-i-1})_*
\Hom_\Ac(X, C^{-i-1})$$ is isomorphic to a submodule of
$\Ext^1_\Ac(X, \Jm d^{-i-2})$ and the latter vanishes because
$n$-orthogonality implies
$$
\Ext^1_\Ac(X, \Jm d^{-i-2}) \cong \Ext^2_\Ac(X, \Jm d^{-i-3})
\cong \ldots \cong \Ext^{n-i+1}_\Ac(X, C^{-n-2}) = 0.
$$
\end{proof}

\subsection{Injectives}\label{again my sweet injectives}
\begin{proposition}\label{again my sweet injectives prop} Let $\C$ be a maximal $n$-orthogonal subcategory. Then the injectives of $\AC$ are precisely of the form $[X \to I^{-n-1} \to \ldots \to I^0]$
for $X \in \C$ and $I^j \in \Inj(\mod(R))$.
\end{proposition}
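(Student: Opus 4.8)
The plan is to exploit that, since $\C$ is maximal $n$-orthogonal, it is functorially finite and, by part ii) of the Lemma above, $\mod(R)$ has $\C$-dimension $\le n$; hence Proposition \ref{injectivesec} and its corollary apply, so $\AC$ has enough injectives and every indecomposable injective of $\AC$ is of the form $\SC\P_C = \pi_\C\,\SA\P_C$ for an indecomposable $C \in \C$. Thus the proposition reduces to two tasks: (i) showing each $\SC\P_C$ has the asserted shape, and (ii) conversely showing that every $Y = [X \to I^{-n-1} \to \ldots \to I^0] \in \AC$ with $X \in \C$ and $I^j \in \Inj(R)$ is injective.

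For (i) I would argue by cases. If $C$ is projective then, by the description of $\SA$ recalled in Section \ref{Section Preliminaries}, $\SA\P_C = \P_{\Dd(C^*)}$ with $\Dd(C^*) \in \Inj(R) \subseteq \C$; hence $\pi_\C$ fixes it and $\SC\P_C = [0 \to \ldots \to 0 \to \Dd(C^*)]$, which has the required form with $X = 0$. If $C$ is non-projective, choose a minimal projective presentation $P_1 \overset{\partial}{\to} P_0 \to C \to 0$ and let $\nu = \Dd\Hom_R(-,R)$ be the Nakayama functor; then $\nu P_0,\nu P_1 \in \Inj(R)$, $\DTr C = \Ker(\nu\partial\colon \nu P_1 \to \nu P_0)$, and the formula for $\SA\P_M$ when $M$ is non-projective gives $\SA\P_C = [\DTr C \to \nu P_1 \to \nu P_0]$. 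Because $\nu P_0,\nu P_1 \in \C$, computing the $\C$-approximation by the Godement construction in the proof of Lemma \ref{ACdefsec} just leaves the last two terms untouched and replaces $\DTr C$ (sitting in degree $-2$) by a $\C$-resolution. So everything hinges on the \emph{Key claim}: for $C \in \C$ non-projective, $\DTr C$ admits a $\C$-resolution whose terms are injective $R$-modules, with the possible exception of its last syzygy, which lies in $\C$ (and can be taken $0$, or already injective, when the resolution is shorter than $n$). Granting this, $\SC\P_C = [X \to I^{-n-1} \to \ldots \to I^{-2} \to \nu P_1 \to \nu P_0]$ with $X \in \C$ and all other terms injective, exactly as asserted.

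To prove the Key claim I would pass to $R^{op}$ via $\Dd$: a $\C$-resolution of $\DTr C$ by injective $R$-modules is the same as a contravariant $\Dd\C$-resolution of $\Tr C$ by projective $R^{op}$-modules. Since $R \in \Proj(R) \subseteq \C = {}^{\perp_n}\C$ we have $\Ext^i_R(C,R) = 0$ for $1 \le i < n$, so the complex $\Hom_R(P_\bullet,R)$ coming from a minimal projective resolution $P_\bullet$ of $C$ is exact in cohomological degrees $1,\ldots,n-1$; this exactness lets one splice the projective $R^{op}$-modules $P_2^*, P_3^*,\ldots$ onto the presentation $\Tr C = \Coker(P_0^* \to P_1^*)$ to build the desired coresolution, its last term being pushed into $\Dd\C$ by the $\C$-dimension bound, and its $\Hom_{R^{op}}(-,D)$-acyclicity for $D \in \Dd\C$ coming again from the same $\Ext$-vanishing. (Tracking the last syzygy here is also where it gets identified with $\DTr\Omega^n C$, as needed in the later sections, but that refinement is not required for the present statement.)

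For (ii), given $Y = [X \to I^{-n-1} \to \ldots \to I^0] \in \AC$ with the $I^j$ injective, I would show $\Ext^{\ge 1}_{\AC}(-, Y) = 0$: any $V \in \AC$ has the finite projective resolution $0 \to \P_{C^{-n-2}} \to \ldots \to \P_{C^0} \to V \to 0$ of \ref{gl.dim}, and $\Hom_{\AC}(\P_M, Y) \cong \Coker(\Hom_R(M, I^{-1}) \to \Hom_R(M, I^0))$ for $M \in \C$; since $I^{-1}, I^0$ are injective and the complex underlying $V$ is exact in negative degrees, the complex computing $\Ext^\bullet_{\AC}(V, Y)$ from these data is exact in positive degrees by the same diagram chase that, in \cite{BJ}, shows every $[A \to I \to J] \in \AA$ with $I,J$ injective is injective in $\AA$. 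Combining (i) and (ii) proves the proposition. The main obstacle is the Key claim — producing the $\C$-resolution of $\DTr C$ with injective terms — since that is where maximal $n$-orthogonality is genuinely used (through $\Ext^i_R(C,R)=0$ for $i<n$ and through $\DTr C$ having $\C$-dimension $\le n$), and where the homological degree bookkeeping must be done with care.
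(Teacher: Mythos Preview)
Your strategy is genuinely different from the paper's, and the Key claim is where it breaks down. The paper never computes $\SC\P_C$ explicitly; instead it shows directly that every $V=[C^{-n-2}\to\ldots\to C^0]\in\AC$ embeds into some $J=[X\to I^{-n-1}\to\ldots\to I^0]$ by taking an injective resolution $0\to C^{-n-2}\to I'^{-n-1}\to\ldots\to I'^0$ in $\Ac$, lifting the identity on $C^{-n-2}$ to $\phi:V\to J'$, and then invoking Lemma~\ref{def n ort sec}\,i) (the splitting lemma for $n$-orthogonal sequences) to see that $\Ker\phi=0$. Injectivity of $J$ is obtained by a one-line dimension shift: the sequence $0\to\P_X\to\P_{I^{-n-1}}\to\ldots\to\P_{I^0}\to J\to 0$ has all middle terms $\P_{I^j}$ injective in $\AC$ (because $\Hom_{\AC}(-,\P_I)\cong\Hom_\Ac(H^0(-),I)$ and $H^0$ is exact), so $\Ext^1_{\AC}(U,J)\cong\Ext^{n+3}_{\AC}(U,\P_X)=0$.

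The gap in your argument is the sentence ``its last term being pushed into $\Dd\C$ by the $\C$-dimension bound.'' Your construction gives an exact sequence
\[
0\to\Dd Q\to\nu P_n\to\ldots\to\nu P_2\to\DTr C\to 0
\]
with $n-1$ injective middle terms and $Q=\Coker(P_{n-1}^*\to P_n^*)$. But the statement that $\Ac$ has $\C$-dimension $\le n$ only says \emph{some} $\C$-resolution of $\DTr C$ has length $\le n$; it does not force the $(n-1)$st syzygy in \emph{this particular} resolution to lie in $\C$. Concretely, to show $\Dd Q\in\C=\C^{\perp_n}$ you would need $\Ext^i_R(D,\Dd Q)=0$ for $D\in\C$ and $1\le i<n$, and dimension-shifting along your sequence reduces this to vanishing of $\Ext$-groups involving $\DTr C$ in degrees $\ge n$, which $n$-orthogonality does not supply. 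Equally, you have not verified that $\nu P_2\to\DTr C$ is a $\C$-cover (i.e.\ that the sequence is $\Hom_\C(D,-)$-acyclic); this needs $\Ext^1_R(D,\Ker(\nu P_2\to\DTr C))=0$, which again is not immediate. So your route, while not hopeless (the Key claim is in fact true---it follows \emph{a posteriori} from the proposition), requires substantially more work than you indicate, and the paper's embedding argument sidesteps all of it.

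Your part (ii) is also too sketchy: the formula $\Hom_{\AC}(\P_M,Y)\cong\Coker(\Hom_R(M,I^{-1})\to\Hom_R(M,I^0))$ is correct, but the assertion that the resulting complex is exact in positive degrees ``by the same diagram chase as in \cite{BJ}'' hides exactly the place where you would again need to use injectivity of the $I^j$ together with $n$-orthogonality. The paper's dimension-shift argument handles this in one line.
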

\begin{proof}
Let $J = [X \to I^{-n-1} \to \ldots \to I^0] \in \AC$ and $U \in
\AC$ be arbitrary. Then by \eqref{gl.dim} we have
$$\Ext^1_{\AC}(U,J) \cong \Ext^{n+3}_\Ac(U,\P_X) =0.$$ This shows
that $J$ is injective. Since retracts of $J$ are clearly of the
same form as $J$ it now suffices to show that any object $V =
[C^{-n-2}\to C^{-n-1} \to \ldots \to C^0]  \in \AC$ embeds to an
object of the same form as $J$. Let $J' = [C^{-n-2} \to I'^{-n-1}
\to \ldots \to I'^0]$ be a part of an injective resolution of
$C^{-n-2}$ in $\Ac$. The identity map on $C^{-n-2}$ lifts to a map
$\phi: V \to J'$
that we shall prove is injective.

Adding an injective summand to $I'^0$ if necessary we can assume
that the induced map $H^0(V) \to H^0(J')$ is injective. The last
condition is equivalent to  $C^{-1} \times I'^{-2} \to C^0
\times_{I'^0} I'^{-1}$ being surjective (see \cite{BJ} Lemma 3.1).
We have $$\Ker \phi = [0 \to C^{-n-1} \to C^{-n} \times I'^{-n-1}
\to \ldots \to C^{-1} \times I'^{-2} \to C^0 \times_{I'^0}
I'^{-1}]_\C.$$
Since $0 \to C^{-n-1}$ is split it follows from
Lemma \ref{def n ort sec lem} i) that $\Ker \phi = 0$.
\end{proof}
\subsection{AR sequences}\label{AR seq sec againnn}
By Corollary \ref{simple objects cor} we know that the simples of
$\AC$ are precisely the simple quotients $\L_X$ of $\P_X$, for
$X \in \C$ indecomposable. An AR sequence in $\C$ is by definition
a simple object $\L_X \in \AC$ such that $X$ is non-projective.

According to  \cite{I} an $(n+1)$-almost split sequence in $\C$ is
an exact sequence
\begin{equation}\label{ARsequencedef}
0 \to X' \to C^{-n} \to \ldots \to C^{-1} \to C^0 \to X \to 0
\end{equation}
with terms in $\C$ with $X$ indecomposable and non-projective
which induces an exact sequence
$$
0 \to \Hom_\C(\- ,X') \to\Hom_\C(\- , C^{-n}) \to \ldots \to
\Hom_{\C}(\- , C^{0}) \to \mathcal{J}(\- , X) \to 0.
$$
Let $V = [X' \to C^{-n} \to \ldots \to C^{-1} \to C^0 \to X] \in
\AC$.
\begin{theorem}\label{again my sweet injectives thm} $V$ is a simple object of $\AC$; hence $V \cong \L_X$. Conversely, since $\L_X$  is simple
it defines an almost split sequence.
\end{theorem}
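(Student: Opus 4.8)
The plan is to obtain both directions as essentially formal consequences of the Yoneda description of simple objects of $\AC$ (the Proposition of Section \ref{Yoneda embedding}, parts ii) and iii)), once one observes that the exactness requirement in Iyama's definition of an $(n+1)$-almost split sequence is, after a harmless re-indexing, literally the condition $(*)$ appearing there. The only genuinely new input is bookkeeping: translating between ``the underlying complex of $R$-modules is exact'' and ``the complex lies in $\AC$'', and checking that a presentation of $\L_X$ may be chosen with degree-$0$ term $X$.

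For the forward implication: given an $(n+1)$-almost split sequence $0 \to X' \to C^{-n} \to \ldots \to C^0 \to X \to 0$, I place $X$ in degree $0$, $C^0$ in degree $-1$, \ldots, $C^{-n}$ in degree $-n-1$ and $X'$ in degree $-n-2$. The resulting complex $V$ has all terms in $\C$, is supported in degrees $[-n-2,0]$, and is exact, so by the description \ref{Vform} it lies in $\AC$. With this indexing, the induced exact sequence $0 \to \Hom_\C(\ ,X') \to \Hom_\C(\ ,C^{-n}) \to \ldots \to \Hom_\C(\ ,C^0) \to \J(\ ,X) \to 0$ from the definition is exactly condition $(*)$ for $V$. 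Since $X \in \C$ is indecomposable and $\P_X$ has the simple quotient $\L_X$ (Corollary \ref{simple objects}), part iii) of the Proposition of Section \ref{Yoneda embedding} applies and gives that $V$ is simple and $V \cong \L_X$.

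For the converse I start from $X \in \C$ indecomposable and non-projective, so that $\L_X$ is simple by Corollary \ref{simple objects}. First I fix a convenient presentation of $\L_X$: since $X \in \C$ one may take the identity map on $X$ as a $\C$-cover of $X$, so the Godement construction of the $\C$-approximation of the classical AR sequence $\L^\Ac_X = [\DTr X \to N \to X]$ (property (6) of Section \ref{roosterlabel}), together with the bound $\C$-$\dim(\Ac)\le n$, presents $\L_X = \pi_\C(\L^\Ac_X)$ as a complex $[X' \to C^{-n} \to \ldots \to C^0 \to X]\in\AC$ with $X$ in degree $0$ and all terms in $\C$. Part ii) of the Proposition of Section \ref{Yoneda embedding} gives that this presentation satisfies $(*)$, i.e. the induced $\Hom_\C$-sequence ending in $\J(\ ,X)\to 0$ is exact. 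It remains to check that the underlying complex of $R$-modules is exact: $H^i(\L_X)=0$ for $i<0$ because $\L_X \in \AC$, and $H^0(\L_X)=H^0(\pi_\C \L^\Ac_X)=H^0(\L^\Ac_X)=0$ because a $\C$-approximation is a quasi-isomorphism and $[\DTr X \to N \to X]$ is the exact AR sequence (so $N\to X$ is onto). Hence $0 \to X' \to C^{-n} \to \ldots \to C^0 \to X \to 0$ is exact with $X$ indecomposable non-projective, i.e. it is an $(n+1)$-almost split sequence.

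The one place where care is needed — and essentially the only place where something could go wrong — is keeping the two indexing conventions for objects of $\AC$ (the one in the Yoneda section versus the one in the statement) consistent, so that the definition of an $(n+1)$-almost split sequence matches $(*)$ term by term; once that is pinned down, the theorem is a formal consequence of results already established.
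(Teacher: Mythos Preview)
Your argument is correct and follows exactly the route the paper takes: both directions are reduced to parts ii) and iii) of the Yoneda Proposition (Section \ref{Yoneda embedding}), with the observation that Iyama's exactness condition on the $\Hom_\C$-sequence is, after re-indexing, precisely condition $(*)$. The paper's proof is literally the single line ``This follows from Proposition \ref{Yoneda embedding}''; what you have written is a careful unpacking of that line, including the verification that the underlying complex is exact (which can also be read off directly from $(*)$ by evaluating at $R\in\C$, since $X$ non-projective forces $\J(R,X)=\Hom_R(R,X)$).
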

\begin{proof}This follows from Proposition \ref{Yoneda embedding prop}.
\end{proof}
This implies again that almost split sequences with ending term
$X$ are unique up to homotopy.
\subsection{AR duality}\label{really really AR}
Let $X,Y \in \C$.  Pick a part of a projective resolution $P^{-n}
\to \ldots \to P^0 \tto Y$ in $\Ac$ and let $K = \Ker (P^{-n} \to
P^{-n+1})$. Put $V = [K \to P^{-n} \to \ldots \to P^0 \to Y] \in
\TA$.  We have $\Ss_\C \P_X = [X' \to I^{-n-1} \to \ldots \to
I^0]$ where the $I^j$'s are injective. Then $\Hom_{\TC}(\P_X,V_\C)
\cong\Hom_{\TA}(\P_X,V) \cong \underline{\Hom}_\Ac(X,Y)$, where
$\underline{\Hom}_\Ac$ is stable hom, see \cite{ARS}.  On the
other hand
$$\Hom_{\TC}(V_\C, \Ss_\C \P_X) \cong \Hom_{\TA}(V, \Ss_\C \P_X) \cong
$$
$$
\Hom_{\TA}(V, X'[n+2]) \cong \Ext^{n+1}_\Ac(Y,X').
$$
Here, the first isomorphism holds since  $\Ss_\C \P_X$ starts in
position $-2-n$. To establish the second last isomorphism we used
the fact that any map $K \to X'$ lifts to a map $V \to \Ss\P_X$
which is unique up to homotopy since the $I^j$'s are injective and
$V$ is acyclic. Thus we have proved
\begin{theorem}\label{really really AR thm} $\underline{\Hom}_\Ac(X,Y) \cong  \Dd \Ext^{n+1}_\Ac(Y,X')$.
\end{theorem}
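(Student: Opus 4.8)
The plan is to obtain the isomorphism $\underline{\Hom}_\Ac(X,Y) \cong \Dd \Ext^{n+1}_\Ac(Y,X')$ as a consequence of the Serre duality property of $\Ss_\C$ from Proposition \ref{Duality}, applied to the object $V_\C \in \AC$. The key structural inputs are: (a) the explicit shape $\Ss_\C \P_X = [X' \to I^{-n-1} \to \ldots \to I^0]$ with injective $I^j$ (Proposition \ref{again my sweet injectives}); (b) the fact that $\pi_\C$ identifies $\Hom$-spaces in $\TA$ with those in $\TC$ on the relevant objects; and (c) the adjunction/duality $\Dd \Hom_{\AC}(\P_X, -) \cong \Hom_{\AC}(-, \Ss_\C \P_X)$.

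First I would set up $V = [K \to P^{-n} \to \ldots \to P^0 \to Y]$ as in the statement and replace it by its $\C$-approximation $V_\C$. Note that $V$ is acyclic as a complex (the projective resolution is exact and $K$ is the relevant kernel), so $V_\C$ represents zero in the localization $\D(\Ac)$; nonetheless $V_\C$ is a genuine nonzero object of $\AC$ carrying homotopical information. The first computation is $\Hom_{\TC}(\P_X, V_\C) \cong \Hom_{\TA}(\P_X, V)$ by the defining property of a $\C$-approximation, and the latter equals $\underline{\Hom}_\Ac(X,Y)$: a chain map $\P_X \to V$ is a map $X \to Y$, null-homotopic maps being exactly those factoring through $P^0 \tto Y$, i.e. through a projective, which is the standard description of the stable hom. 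The second computation is $\Hom_{\TC}(V_\C, \Ss_\C \P_X)$. Using that $\Ss_\C \P_X$ is concentrated in degrees $[-n-2, 0]$ and that $\pi_\C$ is the identity on objects already in $\TC$, one gets $\Hom_{\TC}(V_\C, \Ss_\C\P_X) \cong \Hom_{\TA}(V, \Ss_\C\P_X)$; since the $I^j$ are injective and $V$ is acyclic, any map from the last term $K$ of $V$ (placed in degree $-n-2$) to the last term $X'$ of $\Ss_\C\P_X$ extends uniquely up to homotopy to a chain map, giving $\Hom_{\TA}(V, \Ss_\C\P_X) \cong \Hom_{\TA}(V, X'[n+2])$. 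Finally $\Hom_{\TA}(V, X'[n+2]) \cong \Ext^{n+1}_\Ac(Y, X')$: truncating $V$ appropriately, $V$ glued with $Y$ in degree $0$ represents a shift of $Y$, and $\Hom_{\TA}(Y[\cdot], X'[\cdot])$ in the homotopy category of projective resolutions computes $\Ext$; the index bookkeeping yields $n+1$ rather than $n+2$ because $V$ contains one extra term $K$ beyond the length-$n$ resolution $P^{-n} \to \ldots \to P^0$.

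With these two computations in hand, the conclusion is immediate: Proposition \ref{Duality}(i) gives $\Dd\Hom_{\TC}(\P_X, V_\C) \cong \Hom_{\TC}(V_\C, \Ss_\C\P_X)$, and substituting the two identifications above yields $\Dd\,\underline{\Hom}_\Ac(X,Y) \cong \Ext^{n+1}_\Ac(Y,X')$, hence $\underline{\Hom}_\Ac(X,Y) \cong \Dd\Ext^{n+1}_\Ac(Y,X')$ since $\Dd$ is an involution on finitely generated modules.

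The main obstacle I anticipate is the careful verification of the isomorphism $\Hom_{\TA}(V, \Ss_\C\P_X) \cong \Ext^{n+1}_\Ac(Y,X')$, specifically the degree accounting and the uniqueness-up-to-homotopy of the lift. One must check precisely where $K$, the $P^i$, the $I^j$ and $X'$ sit, confirm that the injectivity of the $I^j$ kills the homotopy ambiguity, and confirm that $V$ being acyclic is exactly what makes the extension of $K \to X'$ to a full chain map both exist and be essentially unique; a sign or index error here would change $n+1$ to $n+2$ or $n$. A secondary subtlety is making sure the passage between $\Hom_{\TC}$ and $\Hom_{\TA}$ is legitimate for $V_\C$ (using that $V_\C \in \TC$ is itself a $\C$-approximation of $V$, so $\Hom_{\TC}(\ , V_\C) \isoto \Hom_{\TA}(\ , V)|_{\TC}$, and dually that $\Ss_\C\P_X$, lying in $\TC$, sees $V$ and $V_\C$ alike). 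These are exactly the points flagged in the excerpt's remarks, so the heavy lifting has been done; what remains is organizing it cleanly.
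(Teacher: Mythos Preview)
Your proposal is correct and follows essentially the same route as the paper's proof: set up $V$ and its $\C$-approximation $V_\C$, identify $\Hom_{\TC}(\P_X,V_\C)$ with $\underline{\Hom}_\Ac(X,Y)$ and $\Hom_{\TC}(V_\C,\Ss_\C\P_X)$ with $\Ext^{n+1}_\Ac(Y,X')$ via the injective shape of $\Ss_\C\P_X$, and then conclude by Serre duality. The only difference is that you make the final appeal to Proposition~\ref{Duality} explicit, whereas the paper leaves it implicit in the phrase ``Thus we have proved''.
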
 This formula was proved in \cite{I} with $X'$ replaced by $\DTr \Omega^n X$ where we recall that $\Omega^n X$ denotes the $n$'th syzygy in a minimal projective resolution of $X$
($\Omega^0 X = X$.) Thus $X' \cong \DTr \Omega^n X$.

\section{Frobenius algebras}\label{section frobenius algebras}
In this section $R$ is a finite dimensional  commutative Frobenius
algebra over a field $k$ and $\Ac = \mod(R)$. $\C \subseteq \Ac$
is a full additive subcategory satisfying $\Dd \C = \C$ and $R \in
\C$. (Here, $\Dd \C \subseteq \mod(R^{op}) = \Ac$ since $R$ is
commutative. See definition \ref{big t Section defi2}.)

\subsection{Frobenius duality}\label{Frobenius algebras section}
The assumption that $R$ is Frobenius implies that $R$ is self
injective. Thus $\Dd = {}^* = \Hom_k( \- , k) = \Hom_R( \- , R)$ in this case.
All projectives in $\Ac$ are self dual; in particular, $\Proj(\Ac) = \Inj(\Ac)$.
This leads to a nice duality theory on $\AA$ and $\AC$.

Let $M \in \Ac$. Assume first that $M$ has no projective direct
summand. Pick projective resolutions
\begin{equation}\label{Frobenius1}
P^{-1} \overset{\pa}{\longrightarrow} P^0 \to M \to 0 \hbox{ and }
P'^{-1} \overset{\pa'}{\longrightarrow} P'^0 \to M^* \to 0
\end{equation}
We obtain
\begin{equation}\label{Frobenius2}\Dd \Tr M \df \Dd (\Coker \pa^*) = (\Coker \pa^*)^* \cong \Ker \pa.\end{equation}
This implies that $\Ker \pa \into P^{-1} \to P^0$ is an injective
copresentation of $\Dd \Tr M \cong \Ker \pa$. Hence we have by
property $(8)$ in Section \ref{roosterlabel} that
\begin{equation}\label{Frobenius3}
\SA \P_M \cong [\Ker \partial \to P^{-1} \to P^0].
\end{equation}
If $M$ is projective the same property shows that  $\SA \P_M \cong
\P_M$.

\medskip
A duality functor on a category is a contravariant autoequivalence
whose square is isomorphic to the identity.
In \cite{BJ} a duality functor (here denoted) $d_\Ac: \AA \to \AA$
was defined as follows: If $M$ has no projective direct summand we
first define $d_\Ac \P_M = [{\Ker \pa'} \to P'^{-1} \to P'^0]$ and
if $M$ is projective $d_\Ac \P_M \df \P_M$. Taking direct sums we
have defined $d_\Ac \P_M$ for an arbitrary $M$. For a general
object $V = [K \to L \to M] \in \AA$ we define
\begin{equation}\label{Frobenius4-1}
d_\Ac V = \Ker(d_\Ac \P_M \to d_\Ac \P_L).
\end{equation}
It is straightforward to verify that $d_\Ac$ defines a
contravariant functor satisfying $d_\Ac \circ d_\Ac \cong
Id_{\AA}$.  By \eqref{Frobenius3} and item (8)
in Section \ref{roosterlabel} the functor $d_\Ac$ relates
with the Serre duality functor as follows:
\begin{equation}\label{Frobenius4}
d_\Ac \P_M = \SA \P_{M^*}, \hbox{ for } M \in \Ac.
\end{equation}

\begin{lem}\label{Frobenius algebras section lem} We have  $d_\Ac \Ker(\pi_\C|_{\AA}) = \Ker(\pi_\C|_{\AA})$.
\end{lem}
\begin{proof} Put $\mathcal{K} = \Ker(\pi_\C|_{\AA})$ and let $V \in
\mathcal{K}$. The following is equivalent: $d_\Ac V \in
\mathcal{K} \iff$
$$
\Hom_{\AC}(W,\pi_\C d_\Ac V) = 0, \forall \, W \in \AC \iff
\Hom_{\AC}(\P_C,\pi_\C d_\Ac V) = 0, \forall \, C \in \C.
$$
Here the last equivalence holds by Proposition \ref{Projectives in
AC prop}. We have
$$
\Hom_{\AC}(\P_C,\pi_\C d_\Ac V) \cong \Hom_{\AA}(\taugA \P_C,
d_\Ac V) \cong \Hom_{\AA}(\P_C, d_\Ac V) \cong
$$
$$
\Hom_{\AA}(V , d_\Ac \P_C) \cong \Hom_{\AA}(V , \SA \P_{C^*})
\cong \Dd \Hom_{\AA}(\P_{C^*}, V) \cong
$$
$$
\Dd\Hom_{\AA}(\taugA \P_{C^*}, V) \cong \Dd \Hom_{\AC}(\P_{C^*},
\pi_\C V) \cong  \Dd \Hom_{\AC}(\P_{C^*}, 0)= 0.
$$
Here we used that $\taugA \P_C = \P_C$, the adjointness $(\taugA:
\AC \leftrightarrows \AA : \pi_\C)$, Serre duality (see item (7)
in Section \ref{roosterlabel}) and \eqref{Frobenius4}. Hence
$d_\Ac \mathcal{K} \subseteq \mathcal{K}$. Since $d^2_\Ac =
Id_{\AA}$ we conclude $d_\Ac\mathcal{K} = \mathcal{K}$.
\end{proof}
\begin{defi}\label{Frobenius algebras section defi}
Define the functor $d_\C \df \pi_\C \circ d_\Ac \circ \taug_\Ac:
\AC \to \AC$.
\end{defi}
\begin{proposition}\label{Frobenius algebras section prop} $d_\C$ is a duality functor and $d_\C \circ \pi_\C \cong
\pi_\C \circ d_\Ac$.
\end{proposition}
\begin{proof}
Let $\mu: \taugA \circ \pi_\C|_{\AA} \to Id_{\AA}$ be the
adjunction morphism and let $V \in \AA$. We get the exact sequence
\begin{equation}\label{ewewewewew}
0 \to \Ker \mu_V \to \taugA \pi_\C(V)
\overset{\mu_V}{\longrightarrow} V \to \Coker \mu_V \to 0.
\end{equation}
Since the adjunction morphism $Id_{\AC} \to \pi_\C \circ \taugA$
is an isomorphism (Proposition \ref{Section AA prop}) we get after
applying $\pi_\C$ to \eqref{ewewewewew} the isomorphism $\pi_\C
\taugA \pi_\C(V) \isoto \pi_\C V$ and we conclude that $\pi_\C
\Ker \mu_V = \pi_\C \Coker \mu_V = 0$. By Lemma \ref{Frobenius
algebras section lem} it follows that
 $\pi_\C d_\Ac \Ker \mu_V = \pi_\C d_\Ac \Coker \mu_V =
0$. Therefore, applying $\pi_\C \circ d_\Ac$ to \ref{ewewewewew}
gives an isomorphism
$$
\pi_\C d_\Ac \taugA \pi_\C(V) \isoto \pi_\C d_\Ac V.
$$
This isomorphism is functorial in $V$ and hence (by definition of
$d_\C$) gives the asserted isomorphism of functors $d_\C \pi_\C
\isoto \pi_\C d_\Ac$. We now get $$d^2_\C = (d_\C
\pi_\C)d_\Ac\taugA \cong (\pi_\C d_\Ac)d_\Ac\taugA \cong
Id_{\AC}$$ since $d^2_\Ac \cong Id_{\AA}$ and $\pi_\C \taug_\Ac
\cong Id_{\AC}$.
\end{proof}

\begin{cor}\label{Frobenius algebras section cor} \textbf{i)} For $C \in \C$ we have  $d_\C \P_C \cong \SC \P_{C^*}$. In particular, if $C$ is self dual we have $d_\C \P_C \cong \SC \P_C$.
 \textbf{ii)} If $C, C' \in \C$ are self dual, then $\Hom_{\AC}(\P_C, \P_{C'}) \cong  \Hom_{\AC}(\P_{C'}, \P_{C})$.
\end{cor}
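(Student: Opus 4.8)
The plan is to obtain both statements by composing isomorphisms that are already in place: the preceding proposition (that $d_\C=\pi_\C\circ d_\Ac\circ\taug_\Ac$ is a duality with $d_\C\circ d_\C\cong Id_{\AC}$), the identity $d_\Ac\P_M\cong\SA\P_{M^*}$ of \ref{Frobenius4}, the definition $\SC=\pi_\C\circ\SA|_{\Proj(\AC)}$, and the properties of $\SC$ recorded in Proposition \ref{Duality}.

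For \textbf{i)} I would first note the small point that, for $C\in\C$, the complex $\P_C=[0\to 0\to C]$ already lies in $\AA$ (its only cohomology sits in degree $0$), so that $\taug_\Ac\P_C=\P_C$. Since $R$ is Frobenius we have $C^*=\Dd C$, and the standing hypothesis $\Dd\C=\C$ gives $C^*\in\C$, so $\P_{C^*}\in\Proj(\AC)$ and $\SC\P_{C^*}$ is defined. The assertion is then just an unwinding of definitions:
$$
d_\C\P_C=\pi_\C\,d_\Ac\,\taug_\Ac\P_C=\pi_\C\,d_\Ac\P_C=\pi_\C\,\SA\P_{C^*}=\SC\P_{C^*}.
$$
When $C$ is self dual, $C^*\cong C$ and this specializes to $d_\C\P_C\cong\SC\P_C$.

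For \textbf{ii)}, since $d_\C$ is a contravariant duality of $\AC$ it gives, for all $X,Y\in\AC$, a natural isomorphism $\Hom_{\AC}(X,Y)\cong\Hom_{\AC}(d_\C Y,d_\C X)$. Taking $X=\P_C$ and $Y=\P_{C'}$ yields $\Hom_{\AC}(\P_C,\P_{C'})\cong\Hom_{\AC}(d_\C\P_{C'},d_\C\P_C)$; by \textbf{i)} (applicable since $C,C'$ are self dual) the right-hand side is $\cong\Hom_{\AC}(\SC\P_{C'},\SC\P_C)$, and this is $\cong\Hom_{\AC}(\P_{C'},\P_C)$ because $\SC$ is fully faithful (Proposition \ref{Duality} i)). One could equally argue via the Serre pairing $\Dd\Hom_{\AC}(\P_M,\ )\cong\Hom_{\AC}(\ ,\SC\P_M)$ applied twice, together with $\SC\P_C\cong d_\C\P_C$ and $\Dd\circ\Dd\cong Id$, without appealing to full faithfulness of $\SC$.

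There is no genuine obstacle here: the whole corollary is formal manipulation of previously established functorial isomorphisms. The only points that deserve a moment's attention are the verification that $\taug_\Ac\P_C=\P_C$ and, in \textbf{ii)}, keeping the variances straight so that the chain of isomorphisms terminates at $\Hom_{\AC}(\P_{C'},\P_C)$ and not at its $\Dd$-dual.
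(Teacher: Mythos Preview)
Your proof is correct and essentially the same as the paper's. For \textbf{i)} the paper simply cites \ref{Frobenius4}, and your unwinding is exactly what that citation means; for \textbf{ii)} the paper writes the same chain of isomorphisms but in the reverse order (Serre duality twice, then \textbf{i)}, then the duality $d_\C$), which is equivalent to your use of full faithfulness of $\SC$ followed by \textbf{i)} and then $d_\C$.
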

\begin{proof} \textbf{i)} follows from \eqref{Frobenius4}. For
\textbf{ii)} we have
$$\Hom_{\AC}(\P_C, \P_{C'}) \cong \Dd \Hom_{\AC}(\P_{C'}, \SC \P_{C}) \cong \Hom_{\AC}(\SC \P_{C}, \SC \P_{C'}) \cong
$$
$$
\Hom_{\AC}(d_\C \P_{C}, d_\C \P_{C'}) \cong \Hom_{\AC}(\P_{C'},
\P_{C}).
$$
\end{proof}
\subsection{{}}\label{it is a t structure on O} Under the hypothesis of Section \ref{section frobenius algebras} we can improve Proposition \ref{partial converse sds prop} iii) as follows:
\begin{proposition}\label{it is a t structure on O prop} Assume that $\TlC$ is a $t$-structure on $\TC$. Then $\C$ is $\Ac$-approximating and functorially finite.
\end{proposition}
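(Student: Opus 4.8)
The plan is to deduce both assertions from Proposition \ref{partial converse} iii) together with the self-duality of the present situation, so that essentially no new work beyond checking hypotheses is required.

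First I would check that the triple $(\C,\Ac,\TlC)$ satisfies the hypotheses of Proposition \ref{partial converse} iii). We are assuming $\TlC$ is a $t$-structure, and $\Ac=\mod(R)$ certainly has enough projectives. Since $R$ is Frobenius it is self-injective, so $\Proj(\Ac)=\add(R)=\Inj(\Ac)$; as $R\in\C$ and $\C$ is a Karoubi closed additive subcategory of $\Ac$, this gives $\Proj(\Ac)\subseteq\C$. Finally, self-injectivity of $R$ means that every $M\in\Ac$ embeds into a projective module $P$ (e.g.\ an injective hull of $M$, which is projective), hence $M\cong\Ker(P\tto P/M)$ is the kernel of an $\Ac$-morphism; in particular every object of $\C$ is of this form. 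Proposition \ref{partial converse} iii) then yields that $\C$ is $\Ac$-approximating.

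It remains to show that $\C$ is contravariantly $\Ac$-approximating, so that, by definition, $\C$ is functorially finite. By the remark following the definition of functorial finiteness (valid since $\Ac=\mod(R)$), $\C$ is contravariantly $\Ac$-approximating if and only if $\Dd\C$ is $\Dd\Ac$-approximating. But $R$ is commutative, so $\Dd\Ac=\mod(R^{op})=\mod(R)=\Ac$, and by hypothesis $\Dd\C=\C$; thus the statement ``$\Dd\C$ is $\Dd\Ac$-approximating'' is literally the statement ``$\C$ is $\Ac$-approximating'', which we have just established. Hence $\C$ is functorially finite.

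This argument is mostly bookkeeping, and I do not expect a genuine obstacle. The only point requiring a little care is the verification of the last hypothesis of Proposition \ref{partial converse} iii), namely that each object of $\C$ is a kernel of an $\Ac$-morphism — this is exactly where the Frobenius (self-injective) assumption on $R$ enters — together with the observation that commutativity of $R$ collapses the duality $\Dd$ onto an endofunctor of $\Ac$ preserving $\C$, so that no independent contravariant argument is needed.
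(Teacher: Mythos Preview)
Your proof is correct and follows essentially the same approach as the paper: both verify the hypotheses of Proposition~\ref{partial converse}~iii) using self-injectivity of $R$ to realize every module as a kernel, and then use $\Dd\C=\C$ (together with commutativity of $R$) to reduce contravariant approximation to the covariant case already established. The only differences are cosmetic---you spell out the hypothesis checks (in particular $\Proj(\Ac)\subseteq\C$) more explicitly, and you realize $M$ as $\Ker(P\tto P/M)$ rather than as $\Ker(R^n\to R^m)$, but these are equivalent for the purpose at hand.
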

\begin{proof} Since $R$ is Frobenius any $M \in \Ac$ is isomorphic to $\Ker f$ for some morphism $R^n \overset{f}{\longrightarrow} R^m$, i.e. $M$ is the kernel of a $\C$-morphism.
The result now follows from Proposition \ref{partial converse sds
prop} iii) and the fact that $\C = \Dd \C$.
\end{proof}

\section{Category $\BGG$}\label{section cat bgg}
We apply our theory to the subcategory of projectives in the
Bernstein-Gelfand-Gelfand category $\BGG$ of a semi-simple complex
Lie algebra $\g$. By theory of Soergel, \cite{S}, this can be
studied by means of a certain subcategory of modules over the
coinvariant algebra $R$ of the Weyl group. The coinvariant algebra
is Frobenius so the results of Section \ref{section frobenius
algebras} apply. In Section \ref{Soergel's theory} we review
Soergel's theory and conclude that category $\BGG$ this ways fits
into higher AR theory. We also discuss the Rouquier complex (see
\cite{EW}) in this context.

\subsection{Category $\BGG$, coinvariant algebra and higher AR theory}\label{Soergel's theory}
Let $\g$ be a complex semi-simple Lie algebra, $\mathfrak{h}
\subset \b \subset \g$ a Cartan subalgebra contained in a Borel
subalgebra.  Let $W$ be the Weyl group and $R \df
S(\h)/(S(\h)^W_+)$ the coinvariant algebra. Here $S(\h)^W_+$ are
the $W$-invariant polynomial functions on $\h^*$ without constant term. For a simple
reflection $s$ let $R^s$ be the invariant ring and $\alpha_s \in
\h^*$ the corresponding simple root. Note that $R$ is free over
$R^s$ with basis $1, \alpha_s$. We write $\T_R = \T_{\mod{(R)}}$, $\AR = \A_{\mod(R)}$ and $d_R = d_{\mod(R)}$, etc.

Let $\BGG$  be the Bernstein-Gelfand-Gelfand category of
representations of $\g$, see \cite{H}. For the sake of simplicity
we shall restrict our attention to the so called principal block
$\BGG_0$ consisting of modules with trivial generalized central
character. Other blocks can be handled with similar methods.

There is the standard duality functor $d_\BGG: \BGG_0 \to \BGG_0$
given by taking the direct sum of all the dual $\h$-weight spaces
in a module; $d_\BGG$ fixes the simples. Let $w_0 \in W$ be the
longest element. Let $M_x$ denote the Verma module
with highest weight $x\rho-\rho$, for $x \in W$, where $\rho$ is
half the sum of the positive roots. Let $P_x$ be a
projective cover of $M_x$.
In \cite{S} a functor $\V: \BGG_0 \to \mod(R)$ with the following
properties was constructed:
\begin{enumerate}
\item $\V$ is exact. \item $\V P_{w_0} \cong R$. \item $\V M_x$ is
isomorphic to the trivial $R$-module $\k$, for $x \in W$. \item
$\V |_{\Proj{\BGG_0}}$ is fully faithful. \item $\V \circ d_\BGG
\cong {}^* \circ \V$ and $\V P \cong (\V P)^*$ for $P \in
\Proj(\BGG_0)$.
\end{enumerate}
For a simple reflection $s$ define an $R$-bimodule $B_s = R
\ot_{R^s} R$. For $\ul{x} = (s_m, \ldots , s_1)$ a sequence of
simple reflections put $x = s_m \cdots s_1 \in W$.  Define the
\emph{Soergel module} $B_x = \V(P_x)$ and the \emph{Bott-Samuelson
module}
$$\BSx = B_{s_m} \ot_R \ldots \ot_R B_{s_1} \ot_R \mathbb{C} \in \mod(R).$$
Any Bott-Samelson module splits into a direct sum of Soergel
modules and if $\ul{x}$ is reduced then $\BSx \cong B_x \bigoplus
\oplus_{y < x} B^{a_{y,x}}_y$, for some numbers $a_{y,x}$ (see
\cite{S2}). Consider the full subcategory $\B$ of $\mod(R)$ whose
objects are isomorphic to direct sums of Soergel modules. Then we
have an equivalence
$$
\V: \Proj(\BGG_0) \isoto \B
$$
From this and the fact that $\BGG$ has finite
cohomological dimension we get equivalences
$$
 \Db(\BGG_0) \cong \Kb(\Proj(\BGG_0)) \isoto   \TB .
$$
The tautological $t$-structure on $\Db(\BGG_0)$ therefore
corresponds to the $t$-structure $\TlB$ on $\TB$ which is the
strictly full subcategory generated by $\{X\in \TB\mid X^i = 0
\hbox{ for } i>0\}$.\footnote{It would be interesting to know if
$\TlB$ is a $t$-structure for non-crystallographic Coxeter groups
$W$ as the heart of this would serve as a category $\BGG$ for a
non-existing Lie algebra. Compare with \cite{EW} Remark 6.2.} We
get the induced equivalence
$$\A_\V: \BGG_0 \isoto \AB \df \TlB \cap \TgB$$ on hearts.

Category $\BGG$ fits into the framework of (Frobenius) higher AR
theory as follows:
\begin{theorem}\label{Soergel's theory thm} \textbf{i)} $\B$ is $R$-approximating, functorially finite and $R$ has finite $\B$-dimension.

\textbf{ii)} $\BGG_0$ is equivalent to a quotient category of
$\AR$.

\textbf{iii)} There is a Serre functor $\SO: \Proj(\BGG_0) \isoto
\Inj(\BGG_0)$ with the property that
$$\Hom_{\BGG_0}(P, M) \cong \Dd \Hom_{\BGG_0}(M, \SO P)$$ for
$P \in \Proj(\BGG_0)$ and $M \in \BGG_0$. It extends to an
auto-equivalence of triangulated categories $\SO: \Db(\BGG_0)
\isoto \Db(\BGG_0)$.

\textbf{iv)} We have $\SO P \isoto d_{\BGG} P$ for $P \in
\Proj(\BGG_0)$.

\textbf{v)} Any $I \in \Inj(\BGG_0)$ admits a projective
resolution of the form $P^a_{w_0} \to P^b_{w_0} \to I \to 0$.

\textbf{vi)} $d_{\B} \circ \A_\V \cong \A_\V \circ d_\BGG$, where
$d_\B$ is the duality functor on $\AB$ given by Definition
\ref{Frobenius algebras section defi}.

 \textbf{vii)} $\Hom_{\BGG_0}(P,P') \cong
\Hom_{\BGG_0}(P',P)$, for $P,P' \in \Proj(\BGG_0)$.
\end{theorem}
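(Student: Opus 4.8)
The plan is to bootstrap everything from Soergel's equivalence $\V\colon\Proj(\BGG_0)\isoto\B$ (i.e. \ref{t str x1}), the fact that $R$ is Frobenius, and the general machinery of Sections 3--5. First I would establish part \textbf{i)}. That $R$ has finite $\B$-dimension follows because $\BGG_0$ has finite global dimension: via \ref{t str x2} every object of $\TB$ lies in $\T^b_\B$, and by Soergel property (3), $\V$ of any module in $\BGG_0$ has a finite resolution by Soergel modules (resolve by projectives in $\BGG_0$ and apply the exact functor $\V$), so every $R$-module occurring as a term of such a resolution has bounded $\B$-dimension; more carefully, since $\V$ is exact and $\V P_{w_0}\cong R$, every $M\in\mod(R)$ of the form $\V N$ has a finite $\B$-resolution, and the general $M$ is handled because $R\in\B$ generates. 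For $\B$ being $R$-approximating and functorially finite, I would invoke Proposition \ref{it is a t structure on O}: since $\TlB$ \emph{is} a $t$-structure by \ref{t str x2}, and $R$ is Frobenius with $R\in\B$ and $\Dd\B=\B$ (the latter from Soergel property (5): $\V P\cong(\V P)^*$, so $\B$ is closed under $\Dd={}^*$), Proposition \ref{it is a t structure on O} gives immediately that $\B$ is $\Ac$-approximating and functorially finite. So \textbf{i)} reduces to checking the hypotheses of that proposition, which are all in hand.

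Given \textbf{i)}, parts \textbf{ii)}--\textbf{vii)} are direct translations of the abstract results through the equivalence $\A_\V\colon\BGG_0\isoto\AB$. For \textbf{ii)}: $\B$ is functorially finite by \textbf{i)}, so Theorem \ref{injectivesec} says $\pi_\B\colon\AR\to\AB$ is a quotient functor; composing with $\A_\V^{-1}$ exhibits $\BGG_0$ as a quotient category of $\AR$. For \textbf{iii)}: apply Proposition \ref{Duality} and Corollary \ref{injectivesec} to get the Serre functor $\Ss_\B\colon\Proj(\AB)\isoto\Inj(\AB)$ satisfying $\Dd\Hom_{\AB}(\P_B,\ )\cong\Hom_{\AB}(\ ,\Ss_\B\P_B)$, extending (using finite $\B$-dimension) to an auto-equivalence of $\T^b_\B\cong\Db(\BGG_0)$; transport along $\A_\V$ and $\P\circ\V\colon\Proj(\BGG_0)\isoto\Proj(\AB)$ to define $\SO$, and note that $\Hom_{\BGG_0}(P,M)\cong\Hom_{\AB}(\P_{\V P},\A_\V M)$ because $\P$ lands in projectives and $H^0$-type adjunctions identify these with maps in $\AB$. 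For \textbf{iv)} and \textbf{vi)}: these are the Frobenius-specific statements, so I would use Corollary to Proposition in \ref{Frobenius algebras section}, namely $d_\B\P_C\cong\Ss_\B\P_{C^*}$, together with $d_\B\circ\pi_\B\cong\pi_\B\circ d_\Ac$; since $\V P\cong(\V P)^*$ (Soergel (5)) every Soergel module is self-dual, so $d_\B\P_{\V P}\cong\Ss_\B\P_{\V P}$, which after transport gives $\SO P\cong d_\BGG P$, and the intertwining $d_\B\circ\A_\V\cong\A_\V\circ d_\BGG$ follows from Soergel property (5) ($\V\circ d_\BGG\cong{}^*\circ\V$) plus the definition of $d_\B$ via $\pi_\B$. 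Then \textbf{vii)} is the special case of Corollary \ref{Frobenius algebras section} ii) (all Soergel modules self-dual) transported back: $\Hom_{\BGG_0}(P,P')\cong\Hom_{\AB}(\P_{\V P},\P_{\V P'})\cong\Hom_{\AB}(\P_{\V P'},\P_{\V P})\cong\Hom_{\BGG_0}(P',P)$.

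For \textbf{v)}: I would combine \textbf{iv)} with the injective structure of $\AB$. By Proposition \ref{injectivesec} (applicable since $\B$ is functorially finite) the injectives of $\AB$ are exactly $\Ss_\B\P_B$ for $B\in\B$; by Proposition \ref{Duality} such an object has a projective resolution obtained from resolving $\P_B$ and applying $\Ss_\B$. Now the key point is Soergel property (2), $\V P_{w_0}\cong R$, which says $R$ (hence $\P_R$) corresponds to $P_{w_0}$; I expect that in $\AB$ the relevant ``injective generator'' structure forces the two-term resolution, i.e.\ $\Ss_\B\P_B$ for any Soergel module $B$ already admits a projective resolution of length one by copies of $\P_R$ — this is essentially the statement that the big projective $P_{w_0}$ is also injective in $\BGG_0$ and that $\Inj(\BGG_0)=\Proj(\BGG_0)$ consists of direct sums of $P_{w_0}$ only in the relevant truncated sense. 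Concretely, since $R$ is self-injective and $R\in\B$, the injective hull in $\mod(R)$ of any module lives in $\add R\subseteq\B$, so the $\C$-approximation of an injective resolution collapses; translating through \ref{Frobenius3} (which computes $\Ss_\Ac\P_M=[\Ker\partial\to P^{-1}\to P^0]$ with $P^i$ projective$=$injective over the Frobenius algebra $R$) and then $\pi_\B$, one gets that $\SO P$ — equivalently $\Inj(\BGG_0)$ — has a projective presentation by copies of $P_{w_0}$, as $\V P_{w_0}=R$ is the only indecomposable projective-injective $R$-module hit. I expect \textbf{v)} to be the main obstacle, since it is the one statement not a pure formality: it requires knowing precisely which Soergel modules are projective \emph{and} injective over $R$, and matching that with the fact that $P_{w_0}$ is the unique indecomposable projective-injective in $\BGG_0$; the cleanest route is probably to observe that a Soergel module $B_x$ is projective as an $R$-module iff it is free, which happens iff $x=w_0$ (since $B_{w_0}=R$ up to the known multiplicity), so the injective resolution of any $I\in\Inj(\BGG_0)$, pushed through $\V$, is a resolution by copies of $R=\V P_{w_0}$, of length at most one by self-injectivity of $R$.
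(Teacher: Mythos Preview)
Your arguments for \textbf{i)}--\textbf{iii)} and \textbf{vii)} coincide with the paper's. For \textbf{iv)} the paper is more direct and avoids any dependence on \textbf{vi)}: for $P$ indecomposable with simple top $L$, both $\SO P$ (by Proposition \ref{Duality} \textbf{ii)}) and $d_\BGG P$ (since $d_\BGG$ fixes simples) are injective hulls of $L$, hence isomorphic. Your route via $d_\B\P_{\V P}\cong\Ss_\B\P_{\V P}$ is fine on the $\AB$ side, but transporting it back to $\SO P\cong d_\BGG P$ presupposes \textbf{vi)}, which you have not yet established.

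For \textbf{v)} your middle sentence (use \ref{Frobenius3} and then $\pi_\B$) is exactly what the paper does, but your ``cleanest route'' at the end is a detour: one never needs to decide which $B_x$ are projective over $R$. The paper's argument is simply that every injective of $\AB$ is $\pi_\B J$ with $J\in\Inj(\AR)$ (Proposition \ref{injectivesec}), that $J=[A\to R^a\to R^b]$ by property (4) in Section \ref{roosterlabel}, and that since $R\in\B$ the degrees $0,-1$ survive $\pi_\B$ unchanged; transporting through $\V$ using $\V P_{w_0}=R$ finishes.

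The real gap is in \textbf{vi)}. ``Soergel property (5) plus the definition of $d_\B$'' is not an argument, and the paper's proof genuinely \emph{uses} \textbf{v)} here. One reduces to a functorial isomorphism $d_\B\A_\V P\cong\A_\V d_\BGG P$ for $P$ projective. The right side is $\V P^\bullet$ for a projective resolution $P^\bullet\tto d_\BGG P$; by \textbf{v)} one may take $P^{-1}=P^a_{w_0}$, $P^0=P^b_{w_0}$, so applying $\V$ (and $\V d_\BGG P\cong\V P$) yields a presentation $R^a\overset{\V\partial}{\to} R^b\to\V P\to 0$ in $\mod(R)$. Then \ref{Frobenius3}--\ref{Frobenius4} give $d_R\P_{\V P}\cong[\Ker\V\partial\to R^a\to R^b]$, hence $d_\B\P_{\V P}=\pi_\B d_R\P_{\V P}\cong\V P^\bullet$. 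Without \textbf{v)} the terms $\V P^{-1},\V P^0$ would be arbitrary Soergel modules rather than free $R$-modules, and the identification with $d_R\P_{\V P}$ would not go through.
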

\begin{proof} \textbf{i)} The first part follows from Proposition \ref{it is a t structure on O prop} since $\TlB$ is a $t$-structure.
$R$ has finite $\B$-dimension since $\BGG_0$ has finite
cohomological dimension.

 \medskip

\noindent \textbf{ii)} This follows from Theorem \ref{injectivesec thm}.

 \medskip

\noindent \textbf{iii)} $\SO$ is defined by transporting $\Ss_\B$ by means of $\A_\V$. The last assertion follows from Corollary \ref{injectivesec cor}.

 \medskip

\noindent \textbf{iv)} We can assume that $P$ is indecomposable.
Thus $P$ is a projective cover of some simple module $L \in
\BGG_0$. Then $\SO P$ and $d_\BGG P$ are both injective hulls of
$L$. Hence they are isomorphic.

\medskip

\noindent \textbf{v)} Let $I \in \Inj(\BGG_0)$. Then $\A_\V I \in
\Inj \AB$. By Proposition \ref{injectivesec prop}  $\A_\V I$ is of
the form $\pi_\B J$ where $J \in \Inj(\AR)$.  Since the injectives
in $\mod(R)$ are precisely the direct sums of copies of $R$ we
obtain from  $(4)$ in Section \ref{roosterlabel} that  $J$ is of
the form $[A \to R^a \to R^b]$, for some  $A \in \mod(R)$. Thus
$$\A_\V I \cong [\ldots \to B^{-2} \to R^a \to R^b]$$
for some $B^i \in \B$. On the other hand let $P^\bullet \tto I$ be
any projective resolution of $I$ in $\BGG_0$. Then by definition
$\A_\V(I) = \V P^\bullet$. Since $\V(P_{w_0}) \cong R$ we obtain
$P^{-1} \cong P^a_{w_0}$ and $P^0 \cong P^b_{w_0}$.

 \medskip

\noindent \textbf{vi)} Let $M \in \BGG_0$ and $Q^{-1} \to Q^0 \to
M \to 0$ be a projective resolution. Then $d_\B \A_\V M \cong
\Ker(d_\B \A_\V  Q^{0} \to d_\B \A_\V
Q^{-1})$ and $\A_\V d_\BGG M \cong \Ker(\A_\V  d_\BGG Q^{0} \to \A_\V
d_\BGG Q^{-1})$. Thus it suffices to construct a functorial isomorphism
$\A_\V  d_\BGG P \isoto d_B \A_\V P$ for $P$ projective. By
\textbf{v)} we can pick a projective resolution $P^\bullet \tto
d_\BGG P$ of the form
$$
P^\bullet =  [\ldots \to P^{-2} \to P^a_{w_0}
\overset{\pa}{\longrightarrow} P^b_{w_0} ].
$$
Then by definition $\A_\V(d_\BGG P) = \V P^\bullet$. On the other
hand, since $\V d_\BGG P \cong \V P$ we get an exact sequence $R^a
\overset{\V \pa}{\longrightarrow} R^b \to \V P \to 0$. By
\eqref{Frobenius3} and \eqref{Frobenius4} we have $ d_R \V P \cong
[\Ker \V \pa \to R^a \to R^b]$ and therefore
$$d_\B \V P= \pi_\B d_R \taug_R \V P =  \pi_\B d_R \V P \cong \pi_\B[\Ker \V \pa \to R^a \to R^b].$$
The latter is evidently isomorphic to $ \V P^\bullet$. Thus  we
obtain $d_\B  \A_\V P = d_\B \V P \cong  \V P^\bullet.$ This
proves the assertion.

 \medskip

\noindent \textbf{vii)} follows from Corollary \ref{Frobenius
algebras section cor}.
\end{proof}
\begin{rem}\label{Soergel's theory rem} The existence of the Serre functor $\SO: \Proj(\BGG_0) \to \Inj(\BGG_0)$  is of course well-known and
follows from Eilenberg-Watts' representability theorem applied to
the functor $\Dd\Hom_{\BGG_0}(P, \- )$, for $P \in \Proj(\BGG_0)$.
The novelty here is how the Serre functor relates with AR duality
on $\mod(R)$.
\end{rem}
\subsection{Note about multiplicities in $\BGG$}\label{multisar}
We know from Proposition \ref{Section AA prop} that $$\Ker \pi_\B
= \{[X \to B \times Y \to Z] \in \AR\mid B \to Z \hbox{ is a }
\B \text{-}\hbox{cover}\}$$ is the kernel of the quotient functor
$\pi_\B: \AR \to \AB$.

By Corollary \ref{simple objects cor} a simple object $[\DTr X \to
Y \to X] \in \AR$ belongs to $\Ker \pi_\B$ if and only if $X
\notin \B$; for $X \in \B$ we have $[\DTr X \to Y \to X]_\C =
\L_X$. Therefore if one could solve the (difficult, but
\emph{only} depending on the AR theory of $\mod(R)$) problem of
describing all the simple subquotients of an object $V \in \AR$
one would in theory get a description of the simple subquotients
of $\pi_\B V$. This amounts to determine the multiplicities of
simple highest weight modules in $\A^{-1}_\V \pi_\B V$. Note that
$\pi_\B V$ has finite length, since $\BGG$ is a finite length
category, while $V$ in general has infinite length.

\medskip

An interesting case is that of a Verma module $M_x$. Let  $c_s =
{{1}\over{2}} (\alpha_s \ot 1 + 1 \ot \alpha_s) \in B_s$ and let
$K_s$ be the complex $R \to B_s$, $1 \mapsto c_s$, with $B_s$ in
degree $0$ and $R$ in degree $-1$. Let $\ul{x} = (s_m, \ldots,
s_1)$ be reduced. There is the\emph{ dual Rouquier complex} (see
\cite{EW})
 $$\Kx = K_{s_m} \ot_R \ldots \ot_R K_{s_1} \ot_R \mathbb{C}.$$
One can show that $\A_\V(M_x) \cong \Kx$. The complex $\Kx$ is
rather complicated but the truncated complex  $\taug_R \Kx$ equals
the somewhat simpler expression
$$\taug_R \Kx= [\Ker \phi \to \oplus^m_{i=1}  B_{s_m} \ot_R \ldots  \widehat{B_{s_i} \ot_R} \ldots  \ot_R B_{s_1} \ot_R \mathbb{C}
\overset{\phi}{\longrightarrow} \BSx].$$
 (See \cite{EW} Lemma 6.17 for an important property of this complex.)  Here $\widehat{B_{s_i} \ot_R}$ means removing this factor from the tensor product and $\phi$ is the differential in $\Kx$.  It would be interesting to know the simple subquotients of  $\taug_R \Kx$.

\subsection{{}}\label{Verma ext} Another interesting topic concerns $\Ext^i_{\BGG_0}(M_x, M_y)$. These groups
are not well understood at all, not even in the case $y=e$. In
theory the latter can be computed by means of the Rouquier
complex. Because, since $M_e$ is projective we have
\begin{equation}\label{first extis}
\Ext^i_{\BGG_0}(M_x, M_e) = \Hom_{\Db(\BGG_0)}(M_x,M_e[i]) \cong
\Hom_{\Db(\A_\B)}(\Kx, \k[i]).
\end{equation}

By Theorem \ref {injectivesec thm} $\pi_B$ has a right adjoint
denoted $\sigma$. It is left exact and maps injectives to
injectives. Thus we get an adjoint pair $\pi_\B: \Db(\AB)
\leftrightarrows \Db(\AR): R\sigma$. The duality functors $d_R$
and $d_\B$ from Section \ref{section frobenius algebras} naturally
extend to duality functors on $\T^b_R \cong \Db(\AR)$ and $\T^b_\B
\cong \Db(\AB)$, respectively. We denote these functors by the
same letters. Let $\iota: \T^b_\B \into \T^b_R$ be the inclusion
functor. Using the results of Section \ref{section frobenius
algebras} one can show that $R\sigma  \cong d_R \circ \iota \circ
d_\B$.  Since $\Kx = \pi_\B \circ \taug_R  \Kx$ we obtain that
\eqref{first extis} equals
$$
\Hom_{\Db(\A_\B)}(\pi_\B \taug_R \Kx, \k[i])  \cong
\Hom_{\Db(\A_R)}(\taug_R \Kx, R\sigma \k[i]) \cong$$
$$ \Hom_{\T^b_R}(\taug_R \Kx, d_R \circ \iota \circ d_\B \k[i]).
$$
Because of this it would be  interesting to describe the complex
$R\sigma \k = d_R \circ \iota \circ d_\B \k$. In the case when $\g
= \mathfrak{sl}_2$ then $\sigma$ is the identity functor so one
may hope that $R\sigma \k$ in general is somewhat simpler than the
dual Rouquier complex itself.

\end{document}